\theoremstyle{definition}
\newtheorem{thm}[algorithm]{Theorem}
\newtheorem*{thm*}{Theorem}
\newtheorem{ithm}{Theorem}
\newtheorem{problem}[algorithm]{Problem}
\newtheorem{lem}[algorithm]{Lemma}
\newtheorem{prop}[algorithm]{Proposition}
\newtheorem{defn}[algorithm]{Definition}
\newtheorem{ex}[algorithm]{Example}
\newtheorem{obs}[algorithm]{Observation}
\newtheorem{question}[algorithm]{Question}
\newtheorem*{question*}{Question}
\newcommand{\cC}{\mathcal C}
\newcommand{\cT}{\mathcal T}
\newcommand{\cF}{\mathcal F}
\newcommand{\cV}{\mathcal V}
\newcommand{\cH}{\mathcal H}
\newcommand{\cP}{\mathcal P}
\newcommand{\cM}{\mathcal M}
\newcommand{\VC}{\mathrm{VC}}
\newcommand{\R}{\mathbb R}
\DeclareMathOperator{\conv}{conv}
\DeclareMathOperator{\sign}{sign}
\DeclareMathOperator{\diff}{diff}
\DeclareMathOperator{\thresh}{thresh}
\DeclareMathOperator{\monr}{mRank}
\DeclareMathOperator{\mrank}{mRank}
\DeclareMathOperator{\vcrank}{vcRank}
\DeclareMathOperator{\orank}{oRank}
\DeclareMathOperator{\omrank}{oRank}
\DeclareMathOperator{\rank}{rank}
\DeclareMathOperator{\radr}{radRank}
\DeclareMathOperator{\sep}{sep}
\newcommand{\supp}[1]{\underline{#1}}
\newcommand{\inv}{^{-1}}
\renewcommand{\emptyset}{\varnothing}
\title{Using oriented matroids to find low rank structure in presence of nonlinearity}
\author{Caitlin Lienkaemper}
\begin{document}

\begin{abstract}
Estimating the linear dimensionality of a data set in the presence of noise is a common problem.
However, data may also be corrupted by monotone nonlinear distortion that preserves the ordering of matrix entries but causes linear methods for estimating rank to fail. 
In light of this, we consider the problem of computing  \emph{underlying rank}, which is the lowest rank consistent with the ordering of matrix entries, and \emph{monotone rank}, which is  the lowest rank consistent with the ordering within columns. 
We show that each matrix of monotone rank $d$ corresponds to a point arrangement and a hyperplane arrangement in $\R^{d}$, and that the ordering within columns of the matrix can be used to recover information about these arrangements. 
Using Radon's theorem and the related concept of the VC dimension, we can obtain lower bounds on the monotone rank of a matrix. 
However, we also show that the monotone rank of a matrix can exceed these bounds.
In order to obtain better bounds on monotone rank, we develop the connection between monotone rank estimation and oriented matroid theory. 
Using this connection, we show that monotone rank is difficult to compute: the problem of deciding whether a matrix has monotone rank two is already NP-hard. 
However, we introduce an ``oriented matroid completion" problem as a combinatorial relaxation of the monotone rank problem and show that checking whether a set of sign vectors has matroid completion rank two is easy. 
 \end{abstract}

\maketitle

\section{Introduction}	

Estimating the dimensionality of a data set is a common problem, particularly in computational neuroscience and mathematical biology more broadly \cites{altan2021estimating, cunningham2014dimensionality, stringer2019high, fusi2016neurons}.
This is typically done using  singular value decomposition, which can estimate the rank of a matrix even in the presence of noise. 
However, measurements may be corrupted by a monotone nonlinear distortion, rather than additive noise \cites{curto22novel, giusti2015clique,   weinreich2006darwinian, husain2020physical, otwinowski2018biophysical}. 
For instance, this can occur when the measured variable is a proxy for some underlying value, such as calcium fluorescence for neural activity \cite{ ak2012opt,siegle2021reconciling,huang2021relationship}.
This can distort the spectrum of a matrix, making estimates of rank based on singular values unreliable. 

\begin{ex}\label{ex:distortion}
The matrix 
\begin{align*}
B  = \begin{pmatrix}13.01 & 12.4& 0.08 \\ 1.6 & 8.52 & -5.56 \\ 2.06 & -3.23 & 4.14 \\ 17.48 & 25.26 & -6.74\end{pmatrix}
\end{align*}
has  singular values $( 37.01, 8.94,  1.07\times 10^{-15})$. Since only two of these are non-negligible, we estimate that $B$ is rank two. 
However, the matrix
\begin{align*}
A = \begin{pmatrix}3.67 & 3.46 & 1.01 \\ 1.17 & 2.34 & 0.57 \\ 1.23 & 0.72 & 1.51 \\ 5.74 & 12.5 & 0.51\end{pmatrix}
\end{align*}
with $A_{ij} = f(B_{ij})$, where here $f(x) = e^{x/10}$,
has three non-negligible singular values $(14.86,\allowbreak 2.42,\allowbreak 0.88)$, and therefore we estimate its rank as three. 
\end{ex}

Thus, singular values are not able to detect the underlying rank of a matrix in the presence of a monotone, nonlinear distortion. We would like a method that would return an estimate of rank two for the matrix A, matching the rank of the underlying matrix B.

Monotone nonlinear functions preserve some information about the matrix, namely the order of  the entries. 
In this paper, we consider how to use this combinatorial information to estimate the rank of a matrix that may be distorted by a monotone nonlinearity. Specifically, we consider the {\it underlying rank} of a matrix, defined in [10], which is the smallest rank consistent with the ordering of matrix entries. The underlying rank serves as a proxy for the true rank of the matrix before the transformation.

\begin{defn}
The \emph{underlying rank} of an $m\times n$ matrix $A$  is the smallest rank $d$ such that there exists a rank $d$ matrix $B$ and a monotone function $f$ such that $A_{ij} = f(B_{ij})$ for all $i = 1, \ldots, m$, $j = 1, \ldots, n$. 
\end{defn}

For instance, the matrix $A$ in Example \ref{ex:distortion} has rank three, but has underlying rank at most two because it is produced from the matrix $B$ by a monotone function, and thus its entries are in the same order.

As it turns out, all of our techniques for estimating underlying rank depend only on either the ordering within columns or within rows of the matrix. Thus we can consider instead the notion of \emph{monotone rank}, introduced in \cite{egger20xxtopological}, which allows us to take a different monotone nonlinear function for each column (or row).

\begin{defn}The \emph{monotone rank} $\mrank(A)$ of an $m\times n$ matrix $A$ is the smallest rank $d$ such that there exists a rank $d$ matrix $B$ and monotone functions $f_1, \ldots, f_n$ such that $A_{ij} = f_j(B_{ij})$ for all $i = 1, \ldots, m$, $j = 1, \ldots, n$. 
\end{defn}

Note that the monotone rank is the lowest rank consistent with the ordering of entries within columns, while the underlying rank is the lowest rank consistent with overall ordering of entries.  
Therefore monotone rank is a lower bound for underlying rank; there exist examples where the underlying rank is greater \cite{curto22novel}.
Notice that $\mrank(A^T)$ is the lowest rank consistent with the ordering of entries within \emph{rows} of a matrix $A$. In contrast to the ordinary rank, $\mrank(A)$ and $\mrank(A^T)$ may be different. 

\textbf{
Given a matrix $A$, how can we find its monotone rank?}
Monotone rank estimation falls into a broader body of work seeking low-dimensional structure in combinatorial data, which we review here. Giusti et al. \cite{giusti2015clique} introduce the problem of seeking geometric structure in matrices which is invariant under monotone transformation, and approach this problem using topological data analysis. The monotone rank was first formally defined by  Egger et al. \cite{egger20xxtopological}, which uses topological methods to estimate underlying rank. Curto et al. introduce the underlying rank in \cite{curto22novel}, which presents methods for estimating the underlying rank with applications to neural data. In \cite{curto2021betti}, Curto et al. characterize the Betti curves of matrices of underlying rank one. 
Finally, in \cite{dunn2018signed}, Anderson and Dunn discuss a similar problem inspired by psychology: they use oriented matroid theory to determine whether the data corrupted by an unknown monotone transformation is consistent with a given linear model. 

Just as the underlying rank of a matrix gives the minimal rank consistent with the order of matrix entries, the \emph{sign rank} of a matrix is the minimal rank consistent with the sign pattern of a matrix. The sign rank has been broadly studied in theoretical computer science \cite{alon2014sign, paturi1986probabilistic, forster2002linear}. In \cite{basri2009visibility}, Basri et al. consider a problem from computer vision, testing whether a set of images is compatible with a single three dimensional object. They prove that this problem is equivalent to checking whether a matrix has sign rank three, and that checking whether a matrix has sign rank three is already NP-complete, a result proven independently in \cite{bhangale2015complexity}.

In this paper, we connect the monotone rank to point arrangements, hyperplane arrangements, and oriented matroids with the goal of computing lower bounds on monotone rank. 
Using this connection, we introduce the \emph{Radon rank} and the \emph{VC rank} of a matrix, and prove that these are both lower bounds for monotone rank. 

\begin{ithm}
\label{ithm:rad_vc}
For any matrix $A$, the Radon rank and the VC rank are both lower bounds for the monotone rank. 
\end{ithm}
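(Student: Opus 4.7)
The plan is to realize a monotone-rank-$d$ matrix as an explicit point/hyperplane arrangement in $\R^d$ and then invoke Radon's theorem and the VC dimension of half-spaces to control the combinatorial data extracted from the column orderings. Concretely, suppose $\mrank(A) = d$, so $A_{ij} = f_j(B_{ij})$ for some rank-$d$ matrix $B$ and monotone functions $f_j$. Factor $B = UV^T$ with $U \in \R^{m\times d}$ and $V \in \R^{n\times d}$, and let $p_i \in \R^d$ be the $i$-th row of $U$ and $v_j \in \R^d$ the $j$-th row of $V$, so that $B_{ij} = \langle p_i, v_j\rangle$. Since each $f_j$ is strictly monotone,
\[
\sign(A_{ij} - A_{kj}) \;=\; \sign(\langle p_i - p_k, v_j\rangle)
\]
for all $i, k, j$; in particular the ordering within column $j$ is precisely the projection order of the points $\{p_i\}$ onto the direction $v_j$, which is the explicit arrangement promised in the paragraph preceding the theorem.

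For the Radon rank bound, I read Radon rank as the smallest $d$ for which every $(d+2)$-subset of rows admits a bipartition that no column ordering strictly separates. Given the factorization above, Radon's theorem applied to any $d+2$ of the points $p_i \in \R^d$ yields a partition $S_1 \sqcup S_2$ with $\conv(S_1) \cap \conv(S_2) \ne \emptyset$. No affine hyperplane strictly separates such a partition; in particular no hyperplane with normal $v_j$ does, so no column ordering separates $S_1$ from $S_2$. Hence the Radon rank of $A$ is at most $d$.

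For the VC rank bound, the argument is parallel. Each dichotomy of the row indices obtained by thresholding the $j$-th column of $A$ coincides, via monotonicity of $f_j$, with a dichotomy induced by a linear threshold along $v_j$, i.e., by an affine half-space in $\R^d$. The concept class of affine half-spaces in $\R^d$ has VC dimension $d+1$, so no subset of $d+2$ rows is shattered by the column-threshold dichotomies, and therefore the VC rank is at most $d$.

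The main obstacle is definitional rather than combinatorial: one must line up the precise formalizations of Radon rank and VC rank (including any off-by-one convention between shattering and Radon-partitionability) with the geometric conditions ``not separated by a column ordering'' and ``realized by an affine half-space.'' Once the definitions are pinned down, each half of the theorem reduces in a single step to a classical fact of discrete geometry -- Radon's theorem for the first bound and the VC dimension of affine half-spaces for the second.
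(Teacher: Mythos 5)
Your Radon rank half is correct and is essentially the paper's argument: a rank-$d$ representation turns every threshold dichotomy of a column into a halfspace dichotomy of the points $p_1,\dots,p_m\in\R^d$ (an affine hyperplane with normal $h_j$), and Radon's theorem then says no set of $d+2$ rows can be shattered, i.e.\ $\dim_{\VC}(\Sigma_{\thresh}(A))\le d+1$, which is exactly $\radr(A)\le\monr(A)$ (the paper routes this through Lemma \ref{lem:thresh} and Lemma \ref{lem:vc_point}, but the content is the same).

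The VC rank half has a genuine gap: you have guessed the wrong object and, as a result, re-proved the Radon bound instead. In the paper, $\vcrank(A)=\dim_{\VC}(\Sigma_{\diff}(A))$, where $\Sigma_{\diff}(A)$ consists of the sign vectors $\sigma_{ik}=(\sign(a_{i1}-a_{k1}),\dots,\sign(a_{in}-a_{kn}))$; these are vectors of length $n$, so the shattering in question is of sets of \emph{columns}, not rows. Your paragraph again bounds the row-dichotomies coming from column thresholds by the VC dimension of affine halfspaces, and then asserts ``therefore the VC rank is at most $d$''; that conclusion does not follow (it is just the $\radr(A)\le d$ statement with an unexplained shift by one) and never touches $\Sigma_{\diff}(A)$. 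The correct argument, as in the paper, is to note $\sign(a_{ij}-a_{kj})=\sign\bigl((p_i-p_k)\cdot h_j\bigr)$, so $\Sigma_{\diff}(A)\subseteq\cT(\cH)$ for the \emph{central} hyperplane arrangement with normals $h_1,\dots,h_n\in\R^d$; shattering a set of $k$ columns forces the corresponding $k$ normal vectors to be linearly independent (the linear, not affine, version of Radon's theorem, Lemma \ref{lem:vc_plane}), hence $k\le d$ and $\vcrank(A)\le\monr(A)$. Note also the asymmetry you need to respect: for points in $\R^d$ the bound is $d+1$ (affine independence), while for central arrangements it is $d$ (linear independence); your proposal collapses these into a single off-by-one adjustment without justification.
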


We use a connection to the \emph{sign rank} to  show that the monotone rank of a matrix can grow like the square root of matrix size by translating an example from \cite{forster2002linear}. 
This is faster than either the Radon rank or the VC rank can grow. 
\begin{ithm}
\label{ithm:sqrt}
For each $N = 2^n$,  there exists a  $N\times N$ matrix $A_N$ with $$\monr(A_{N}) \geq  \sqrt{N}-1.$$
\end{ithm}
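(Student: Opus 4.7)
The plan is to translate Forster's lower bound on the sign rank of Hadamard matrices into a lower bound on monotone rank. Let $H_N$ be an $N\times N$ Hadamard matrix, which exists because $N = 2^n$, and define $A_N$ to be the matrix obtained from $H_N$ by replacing its first row with a row of zeros.

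First I would set up a reduction from monotone rank to sign rank. Suppose $\mrank(A_N) = d$, so there exist a rank $d$ matrix $B$ and monotone functions $f_1,\ldots,f_N$ with $A_{ij} = f_j(B_{ij})$. Since each $f_j$ is monotone and $A_{1j} = 0$ while $A_{ij} = \pm 1$ for $i \ge 2$, we obtain $\sign(B_{ij} - B_{1j}) = H_{ij}$ for every $i \ge 2$ and every $j$. Factoring $B = UV$ with $U\in\R^{N\times d}$ and $V\in\R^{d\times N}$, writing $u_i$ for the $i$th row of $U$ and $v_j$ for the $j$th column of $V$, the $(N-1)\times N$ matrix $B'$ with $B'_{ij} = B_{ij} - B_{1j} = \langle u_i - u_1,\, v_j\rangle$ has rank at most $d$. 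Hence the submatrix $H'$ of $H_N$ formed by rows $2,\ldots,N$ has sign rank at most $d$.

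Finally I would apply Forster's theorem \cite{forster2002linear}, which states that the sign rank of any $\pm 1$ matrix $M$ of size $m\times n$ is at least $\sqrt{mn}/\|M\|_2$. Since $H_N H_N^T = NI$, the spectral norm of $H_N$ equals $\sqrt{N}$, and since the spectral norm of a submatrix is no larger than that of the full matrix, $\|H'\|_2 \le \sqrt{N}$. Therefore the sign rank of $H'$ is at least $\sqrt{(N-1)N}/\sqrt{N} = \sqrt{N-1}$, yielding $\mrank(A_N) \ge \sqrt{N-1}$; the elementary inequality $(\sqrt{N-1}+1)^2 \ge N$ then gives $\sqrt{N-1} \ge \sqrt{N}-1$. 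The main obstacle is the rank reduction in the second paragraph: once one observes that subtracting one row of a rank $d$ matrix from its other rows produces a matrix of rank at most $d$, Forster's bound delivers the conclusion. The key design choice is to use a zero row (rather than some other row of constants), so that the sign pattern of $B'$ matches $H'$ exactly.
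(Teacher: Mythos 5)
Your proof is correct, and it reaches the bound by a genuinely different construction than the paper's. The paper builds $A_N$ as an order matrix whose $j$th column orders the rows according to the $j$th sign vector of $H_n$, so that the columns of $H_n$ appear among the threshold topes $\Sigma_{\thresh}(A_N)$; it then combines Proposition~\ref{prop:srmr} in the form $\rank_{\sign}(\Sigma_{\thresh}(A)) \le \monr(A)+1$ with Forster's bound $\rank_{\sign}(H_n)\ge \sqrt N$, the ``$-1$'' in the statement coming from the affine shift by the all-ones vector in that proposition. You instead modify $H_N$ directly by zeroing out its first row and run the reduction through row differences: any rank-$d$ monotone representation $B$ of $A_N$ gives the rank-$\le d$ matrix $B_{ij}-B_{1j}$, whose sign pattern on rows $2,\dots,N$ is exactly the Hadamard submatrix $H'$, and Forster applied to this $(N-1)\times N$ matrix (with $\|H'\|\le\|H_N\|=\sqrt N$) yields $\monr(A_N)\ge\sqrt{N-1}\ge\sqrt N-1$. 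This is essentially the difference-tope half of Proposition~\ref{prop:srmr} carried out by hand: you trade one row rather than one unit of rank, which in fact gives the marginally stronger bound $\sqrt{N-1}$, and your argument is self-contained, needing no threshold-tope machinery. Two small remarks: your $A_N$ has many tied entries within each column, so it is not an ``order matrix'' in the sense used elsewhere in the paper, though the theorem only asks for a matrix and ties do not affect your lower-bound argument (the implication $A_{ij}>A_{1j}\Rightarrow B_{ij}>B_{1j}$ needs only weak monotonicity); and you implicitly take ``monotone'' to mean increasing, as the paper does---if decreasing $f_j$ were allowed, flipping the signs of the corresponding columns of $B'$ and $H'$ changes neither the rank nor the spectral norm bound, so the argument survives.
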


Next, we connect monotone rank to oriented matroid theory. 
We define two notions of the \emph{oriented matroid rank}, the \emph{threshold oriented matroid rank} which generalizes the Radon rank and the \emph{difference oriented matroid rank} which generalizes the VC rank. We define the oriented matroid rank to be the maximum of these two values. 
We show that they are both lower bounds for the monotone rank, and can exceed the bounds from Radon rank and VC rank.

\begin{ithm}
\label{ithm:omrank}
For any matrix $A$, the oriented matroid rank is a lower bound for the monotone rank. 
\end{ithm}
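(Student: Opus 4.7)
The plan is to decompose the claim along the definition of the oriented matroid rank. By definition, $\omrank(A)$ is the maximum of the threshold oriented matroid rank and the difference oriented matroid rank, so it suffices to show that each of these two ranks is bounded above by $\mrank(A)$; the theorem then follows by taking the maximum. In both cases, the strategy mirrors the proof of Theorem \ref{ithm:rad_vc}: a monotone-rank-$d$ realization produces a geometric arrangement in $\R^d$, which in turn yields an oriented matroid of the correct rank that completes the combinatorial data used to define the lower bound.

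Fix a matrix $A$ with $\mrank(A) = d$, so that there exist a rank-$d$ matrix $B$ and monotone functions $f_1, \ldots, f_n$ with $A_{ij} = f_j(B_{ij})$. The key observation is that any sign-vector data extracted from within-column comparisons of $A$ is also realized by $B$: for any threshold $t$ and column $j$, the sign pattern of $(A_{ij} - t)_i$ coincides with the sign pattern of $(B_{ij} - f_j^{-1}(t))_i$, and the sign of $A_{ij} - A_{kj}$ equals the sign of $B_{ij} - B_{kj}$. Since $B$ has rank $d$, its rows determine a point arrangement in $\R^d$ and its columns determine linear functionals on $\R^d$ whose level sets are the relevant hyperplanes, so in both situations the sign patterns arise from oriented matroids realized geometrically in $\R^d$.

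I would then verify that these geometric oriented matroids are valid completions of the combinatorial data defining the threshold and difference oriented matroid ranks. On the threshold side, the affine hyperplane arrangement cut out on the rows of $B$ by the columns and their thresholds produces an oriented matroid whose covectors include every sign vector obtainable by thresholding a column of $A$; this is a threshold oriented matroid completion of the data of $A$ of rank at most $d$ in the paper's convention. On the difference side, the linear hyperplane arrangement coming from the columns of $B$, evaluated on pairwise differences of rows, realizes the within-column comparison signs of $A$ and gives a difference oriented matroid completion of rank at most $d$. Combining the two estimates yields $\omrank(A) \leq d = \mrank(A)$.

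The main obstacle I expect is careful bookkeeping rather than a new idea: one must check that the combinatorial sign vectors used in the definitions of the threshold and difference oriented matroid ranks are exactly the ones realized as covectors of the affine and linear arrangements extracted from $B$, and that the rank conventions for oriented matroids (in particular, whether an affine realization in $\R^d$ counts as rank $d$ or $d+1$) line up consistently so that a rank-$d$ matrix indeed certifies completions of combinatorial rank at most $d$. Once these identifications are spelled out, both inequalities reduce to recognizing that the sign data obtained from $A$ is visible in the geometric realization from $B$.
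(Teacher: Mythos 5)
Your proposal is correct and matches the paper's own argument: the paper likewise takes a rank-$d$ representation $(\cP,\cH,\cF)$ of $A$, shows $\Sigma_{\thresh}(A)\subseteq\cT(\cP)$ and $\Sigma_{\diff}(A)\subseteq\cT(\cH)$, and uses the resulting representable oriented matroids of rank $d+1$ and $d$ as completions, with the affine-versus-linear rank shift absorbed by the $-1$ in the definition of $\orank(A)$. The ``bookkeeping'' you defer is exactly what the paper's Lemmas on threshold and difference topes and its rank conventions supply, so no further idea is needed.
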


This connection to oriented matroid theory also allows us to prove that computing monotone rank is hard, even when restricting to rank two:

\begin{ithm}
\label{ithm:hard}
The problem of deciding whether a matrix $A$ has monotone rank two is complete for the existential theory of the reals, and therefore NP-hard. 
\end{ithm}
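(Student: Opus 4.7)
My approach has two parts. First I would show the decision problem lies in the existential theory of the reals $\exists\mathbb{R}$; then I would reduce a known $\exists\mathbb{R}$-complete problem to it. Together these give $\exists\mathbb{R}$-completeness, and hence NP-hardness, since $\mathrm{NP} \subseteq \exists\mathbb{R}$.

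Membership in $\exists\mathbb{R}$ is straightforward. The matrix $A$ has monotone rank at most $2$ iff there exist real numbers $u_{i,a}$ for $i \in [m]$ and $v_{j,a}$ for $j \in [n]$, with $a \in \{1,2\}$, such that setting $B_{ij} = u_{i,1}v_{j,1} + u_{i,2}v_{j,2}$ we have $\sign(B_{ij} - B_{kj}) = \sign(A_{ij} - A_{kj})$ for every triple $(i,k,j)$. This is a Boolean combination of polynomial strict inequalities and equalities in $2(m+n)$ real variables, hence an existential first-order sentence over $\R$.

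For $\exists\mathbb{R}$-hardness I would reduce from the stretchability problem for rank-$3$ oriented matroids (equivalently, realizability of simple pseudoline arrangements), shown to be $\exists\mathbb{R}$-complete by Mn\"ev and Shor. The reduction exploits the geometric picture from Theorem \ref{ithm:omrank}: a monotone rank $2$ factorization of $A$ yields $m$ points $u_i \in \R^2$ and $n$ direction vectors $v_j \in \R^2$, and column $j$ orders the rows according to the sweep of $\{u_i\}$ in direction $v_j$. Given a chirotope $\chi$ on ground set $[N]$, I would construct a matrix $A_\chi$ of size roughly $N \times \binom{N}{2}$ with one column $(i,k)$ per pair of elements, designed so that rows $i$ and $k$ are tied while the remaining rows $\ell$ are placed above or below that tie according to $\chi(i,k,\ell) \in \{+,-\}$. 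The column tie forces $v_{ik} \perp (u_i - u_k)$, and then the sign of $\langle u_\ell - u_i, v_{ik}\rangle$ reads off which side of the line through $u_i, u_k$ the point $u_\ell$ lies on, recovering $\chi(i,k,\ell)$. Hence $A_\chi$ has monotone rank $2$ if and only if $\chi$ is realizable in $\R^2$, completing the reduction.

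The main obstacle will be to calibrate the encoding so that the ties in $A_\chi$ rigidly enforce the perpendicularity constraints $v_{ik} \perp (u_i - u_k)$ without collapsing the configuration (for example, forcing the $u_i$ coincident or collinear in unintended ways), and so that non-simple chirotope data (collinear triples, reorientations) is handled correctly. Related encodings appear in the hardness reductions for sign rank $3$ due to Basri et al.\ and Bhangale--Kopparty, which should provide a template for filling in the details; alternatively, one could route the reduction through the oriented matroid completion formalism developed earlier in the paper, showing that every rank-$3$ realizability instance lifts to a monotone rank $2$ instance of polynomial size.
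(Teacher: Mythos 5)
Your membership argument is fine (the paper does not even spell it out), but the hardness reduction as sketched has a genuine gap, and it is exactly the point where your route diverges from the paper's. A matrix does not just record, for each column $(i,k)$, which rows lie above or below the tied pair: it records a \emph{total} order of all entries in that column. A rank two representation $(\cP,\cH,\cF)$ must reproduce that entire order, i.e.\ the full order of the projections $u_\ell\cdot v_{ik}$, not merely the partition into the two sides of the line through $u_i,u_k$. Your construction of $A_\chi$ leaves the within-side orders unspecified; if you fill them in arbitrarily you over-constrain the instance, and a realizable chirotope $\chi$ may well have \emph{no} realization whose sweep orders in the directions $v_{ik}$ match your arbitrary choices, so the forward direction ($\chi$ realizable $\Rightarrow$ $\monr(A_\chi)=2$) can fail. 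Choosing within-column orders that are guaranteed to be simultaneously realizable whenever $\chi$ is would essentially require knowing a realization, which is the problem being decided. This is precisely the difficulty the paper sidesteps by reducing not from Mn\"ev--Shor chirotope realizability but from realizability of \emph{simple allowable sequences} (Hoffmann--Kleist--Miltzow), where the complete cyclic list of sweep permutations is part of the input: the matrix $A(\Pi)$ is then built directly from those permutations, and the converse direction follows from a counting argument (a simple allowable sequence already has the maximal $m(m-1)$ permutations, so a rank two representation cannot produce any sweep orders beyond those of $\Pi$).

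Two further points in your gadget need care even if the ordering issue were resolved. First, the tie $A_{i,(ik)}=A_{k,(ik)}$ forces $u_i\cdot v_{ik}=u_k\cdot v_{ik}$ only if the functions $f_j$ are \emph{strictly} monotone; with merely nondecreasing $f_j$ a tie in $A$ imposes no constraint on $B$, and your perpendicularity trick evaporates. The paper's reduction avoids ties altogether, so it is insensitive to this convention. Second, if a representation happens to place $u_i=u_k$, the tie is satisfied vacuously and the column signs no longer read off $\chi(i,k,\ell)$, so the backward direction also needs an argument ruling out coincident (and unintended collinear) points. You flag these calibration issues yourself, but they are not peripheral: together with the total-order problem they are the substance of the reduction, and the proposal does not yet contain a mechanism for handling them.
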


On the other hand, determining whether a matrix has oriented matroid rank two is easy: 

\begin{ithm}
\label{ithm:easy}
There is an $O(m^2n)$ algorithm to determine whether an $m\times n$ matrix has oriented matroid rank two. 
\end{ithm}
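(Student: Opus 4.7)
The plan is to exploit the rigid combinatorial structure of rank-two oriented matroids. A rank-two oriented matroid on $m$ elements is equivalent to an antipodal cyclic arrangement of $m$ signed points on a circle, and every cocircuit is specified by a diameter: elements strictly on one side form the positive part, elements strictly on the other form the negative part, and elements lying on the diameter are zero. Dually, the points of a rank-two linear arrangement lie on a circle in $\R^2$ and each hyperplane through the origin cuts them into a positive arc and a negative arc. Thus testing whether the sign data read off from a matrix $A$ admits a completion of oriented matroid rank two amounts to asking whether there is a cyclic ordering of the $m$ rows of $A$ in which, for every column-derived sign vector $\sigma_k\in\{+,-,0\}^m$, the positive entries form one contiguous arc and the negative entries form another.

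The first step is to produce the relevant sign vectors. For the threshold oriented matroid rank each column of $A$ yields one sign vector via thresholding; for the difference oriented matroid rank each column yields sign vectors of pairwise row-differences. In either case this data is computable in $O(mn)$ time, and the rank-two decision becomes a variant of the circular consecutive-ones problem: decide whether some cyclic permutation of $[m]$ makes the positive and negative supports of every $\sigma_k$ into disjoint arcs.

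To meet the $O(m^2 n)$ budget, I would process the data row-pairwise. First, fix a reference row $r_0$ and tabulate, for each other row $r$ and each column $k$, whether $r$ agrees with, opposes, or is incomparable to $r_0$ under $\sigma_k$; this costs $O(mn)$. Then, for each of the $\binom{m}{2}$ unordered pairs $\{r,r'\}$, sweep through the $n$ columns in $O(n)$ time and accumulate the constraints they force on the cyclic position of $r$ and $r'$ (must be consecutive, must be separated by a specified subset, or incompatible). Finally, merge the pairwise constraints into a single candidate cyclic order (or detect an inconsistency) using a PQ- or PC-tree-style consolidation whose running time is dominated by the pairwise phase. The total running time is $O(m^2 n)$ as claimed.

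The main obstacle I expect is correctness: showing that pairwise compatibility of column sign vectors already forces global realizability as a rank-two oriented matroid. For general oriented matroids, local consistency is much weaker than global realizability, but in rank two the underlying geometry collapses to one-dimensional cyclic combinatorics, and the oriented matroid axioms reduce to conditions on small subsets of elements. The technical heart of the proof will be to verify that the cocircuit axioms in rank two, and the compatibility needed for an oriented matroid completion, are enforced already at the level of triples and quadruples of rows, so that the pairwise sweep captures every potential obstruction. Once this reduction from global to pairwise is established, extracting an explicit rank-two completion from the verified cyclic order is routine.
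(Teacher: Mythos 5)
There is a genuine gap, and it lies in your central reduction. The paper's route (which yours does not match) is: a matrix has difference oriented matroid rank two iff the $m(m-1)$ difference sign vectors $\Sigma_{\diff}(A)\subseteq\{+,-\}^n$ (ground set $=$ the $n$ columns) extend to the topes of a rank-two oriented matroid; by the structural lemma for rank two, this holds iff the \emph{sign vectors themselves} can be placed in an antipodally closed circular order in which each of the $n$ coordinates flips exactly once between $X$ and $-X$, and a greedy algorithm (sort the topes by $|\sep(X,X^*)|$ from a reference tope and extend a chain) finds such an order in time linear in (number of sign vectors)$\times n$, i.e.\ $O(m^2n)$. Your proposal instead asks for a cyclic ordering of the $m$ \emph{rows} of $A$ in which every column-derived sign vector has contiguous positive and negative arcs. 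That is a different condition, and it is not equivalent to oriented matroid rank two. For the difference data the objects to be circularly ordered are the $m(m-1)$ pairwise-difference vectors (equivalently, the chambers of a rank-two arrangement on the columns), not the rows; for the threshold data the relevant completion is to a matroid of rank $d+1=3$, not $2$, and a column yields about $m$ threshold vectors, not one. Concretely, your row-cyclic criterion would force the rows to behave like elements of a rank-two matroid (points in ``convex position''-like structure): a monotone-rank-two matrix realized by a planar point configuration with one point inside the convex hull of the others has threshold topes that cannot all be arcs of any single cyclic order of the rows, so your test would reject matrices it should accept.

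The second gap is the one you flag yourself: the claim that pairwise compatibility of the constraints, merged by a PQ/PC-tree, suffices for global realizability is exactly the step that needs proof, and nothing in the proposal supplies it; the running-time claim for the merge phase is also unsubstantiated. The paper avoids both issues by proving the uniqueness (up to reversal) of the circular order of topes of a rank-two oriented matroid and then exhibiting an explicit $O(Mn)$ greedy procedure for $M$ sign vectors of length $n$, applied with $M=m(m-1)$. If you want to keep a consecutive-ones flavor, the correct formulation is a circular-ones problem on the $m(m-1)\times n$ matrix whose rows are $\pm\Sigma_{\diff}(A)$ (order the \emph{topes} so each column is an arc, respecting antipodality), which would also meet the $O(m^2n)$ budget; but as written, with the cyclic order placed on the rows of $A$, the reduction is to the wrong problem.
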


\setcounter{ithm}{0}
Along the way, we introduce the \emph{oriented matroid completion} problem as a common combinatorial relaxation of the monotone rank problem, the sign rank problem, and the underlying rank problem. 

\begin{problem}\label{prob:oriented_matroid_completion}
Given a set of sign vectors, what is the minimal rank of an oriented matroid $\cM$ which contains them among its topes?
\end{problem}
Progress on this oriented matroid completion problem translates to progress on the monontone rank problem: lower bounds on the oriented matroid completion rank of a set of sign vectors translate to lower bounds on monotone rank. 

The structure of this paper is as follows. 
In Section \ref{sec:geometry}, we introduce the point  and hyperplane arrangements associated to each matrix. 
In Section \ref{sec:sign_vectors}, we associate two sets of sign vectors to a matrix. We then can define the Radon rank and the VC rank and prove Theorem \ref{ithm:rad_vc}. 
We then discuss the connection to sign rank and prove Theorem \ref{ithm:sqrt}. 
In Section \ref{sec:matroids}, we introduce oriented matroid theory, define the oriented matroid rank, and prove  Theorem \ref{ithm:omrank}. We also discuss the oriented matroid completion problem. 
In Section \ref{sec:complexity}, we consider the computational complexity of monotone rank and  oriented matroid rank, focusing specifically on the rank two case. 
We prove Theorems \ref{ithm:hard} and \ref{ithm:easy}. 
Finally, we conclude with open questions in Section \ref{sec:conclusion}. 

\section{Column permutations and the geometry of monotone rank}\label{sec:geometry}
Our approach depends on connecting matrices of monotone rank $d$ to point arrangements and hyperplane arrangements in $\R^{d}$, 
and using the ordering within columns to recover combinatorial information about these arrangements. 
This results from a standard result in linear algebra, the existence of the \emph{rank factorization} \cite{piziak1999full}. 	
\begin{lem}[\cite{piziak1999full}]
If $B$ is a $m\times n$ matrix of rank $d$, then $B$ has a rank factorization $B = PH^T$, where $P$ is $m\times d$ and $H$ is $n\times d$. 
\end{lem}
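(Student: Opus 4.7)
The plan is to construct the factorization directly from a choice of basis for the column space of $B$. Since $\rank(B) = d$, the column space $\mathrm{col}(B) \subseteq \R^{m}$ is a $d$-dimensional subspace, so I would begin by picking any basis $p_{1}, \ldots, p_{d}$ of $\mathrm{col}(B)$ and assembling these vectors as the columns of an $m \times d$ matrix $P$.

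Next, each column $b_{j}$ of $B$ lies in $\mathrm{col}(B)$, hence can be expressed uniquely as a linear combination $b_{j} = \sum_{k=1}^{d} c_{kj}\, p_{k}$. I would collect these coefficients into a $d \times n$ matrix $C$ with entries $C_{kj} = c_{kj}$, so that the $j$-th column of $PC$ is $\sum_{k} C_{kj}\, p_{k} = b_{j}$, giving $B = PC$. Setting $H = C^{T}$ then yields an $n \times d$ matrix with $B = PH^{T}$, as desired.

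Because this is a classical fact, there is no real obstacle; the only subtlety worth flagging is that the factorization is highly non-unique, depending on the choice of basis for $\mathrm{col}(B)$. Any two rank factorizations of $B$ differ by an invertible $d \times d$ change of basis on the right: if $B = PH^{T} = P'H'^{T}$, then $P' = PG$ and $(H')^{T} = G^{-1} H^{T}$ for some invertible $G \in \R^{d \times d}$. One could alternatively produce a canonical factorization via the reduced row echelon form or the singular value decomposition of $B$, but for the statement as given the basis construction above suffices, and the non-uniqueness will in fact be convenient for the subsequent geometric interpretation, since it corresponds to the freedom in how the point and hyperplane arrangements in $\R^{d}$ are positioned.
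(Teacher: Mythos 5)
Your construction is correct: choosing a basis of the column space, writing each column of $B$ in that basis, and setting $H = C^{T}$ is exactly the standard rank-factorization argument, and the dimensions and the identity $B = PH^{T}$ all check out. The paper does not prove this lemma at all but simply cites \cite{piziak1999full}, and your proof is the same classical construction found there, so there is nothing to reconcile; the remark on non-uniqueness is accurate but not needed for the statement.
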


By considering the rows of $P$ and $H$ as vectors in $\R^d$,
we apply this to matrices of \emph{monotone rank} $d$ as follows: 
\begin{obs}
If $A$ is an  $m\times n$ matrix of monotone rank $d$, there exist sets of vectors $\cP = \{p_1, \ldots, p_m\}, \allowbreak \cH = \{h_1, \ldots, h_n\} \subset \R^d$ and monotone functions $\cF = \{f_1, \ldots, f_n\}$  such that $A_{ij} = f_j(p_i \cdot h_j)$. 
\end{obs}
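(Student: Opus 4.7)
The plan is to prove this observation by directly chaining together the definition of monotone rank with the rank factorization lemma stated immediately before.

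First, I would unpack the definition of monotone rank. By hypothesis, $A$ has monotone rank $d$, so there exists a rank $d$ matrix $B \in \R^{m \times n}$ together with monotone functions $f_1, \ldots, f_n$ such that $A_{ij} = f_j(B_{ij})$ for all $i,j$. These will be the same monotone functions $\cF = \{f_1, \ldots, f_n\}$ appearing in the statement; the work is just to find the point sets $\cP$ and $\cH$ so that $B_{ij} = p_i \cdot h_j$.

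Next, I would apply the rank factorization lemma to $B$ to obtain a factorization $B = P H^T$ with $P \in \R^{m \times d}$ and $H \in \R^{n \times d}$. Defining $p_i \in \R^d$ to be the $i$-th row of $P$ and $h_j \in \R^d$ to be the $j$-th row of $H$, the matrix product $PH^T$ expands entrywise as $B_{ij} = (PH^T)_{ij} = p_i \cdot h_j$. Substituting into the equation $A_{ij} = f_j(B_{ij})$ then yields $A_{ij} = f_j(p_i \cdot h_j)$, as desired.

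There is no real obstacle here: the observation is essentially a restatement of the definition of monotone rank once one replaces the abstract rank $d$ matrix $B$ with its rank factorization and recognizes the rows of the factors as vectors in $\R^d$. The only thing worth emphasizing in the exposition is that the monotone functions $f_j$ are untouched by the factorization step, so that the combinatorial content of $A$ (the within-column orderings) is fully determined by the inner products $p_i \cdot h_j$, setting up the geometric reformulation in terms of point and hyperplane arrangements that drives the rest of Section~\ref{sec:geometry}.
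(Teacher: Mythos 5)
Your proposal is correct and matches the paper's argument exactly: the paper also obtains $B$ and the functions $f_j$ from the definition of monotone rank, applies the rank factorization $B = PH^T$, and reads off $p_i$ and $h_j$ as the rows of $P$ and $H$. Nothing is missing.
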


We say that $(\cP = \{p_1, \ldots, p_m\}, \cH = \{h_1, \ldots, h_m\}, \cF = \{f_1, \ldots, f_n\})$ is a \emph{rank $d$ representation} of $A$. 
Based on Observation \ref{obs:sweep_orders}, we will call  $\cP$ the set of  \emph{points} and think of it as representing a point arrangement, and  will call $\cH$ the set of \emph{normal vectors} and think of it as representing a hyperplane arrangement. The roles of $\cP, \cH,$ and $\cF$ are illustrated in Figure \ref{fig:matrix_and_point_arrs}. 

\begin{figure}[ht!]
\includegraphics[width = 6 in]{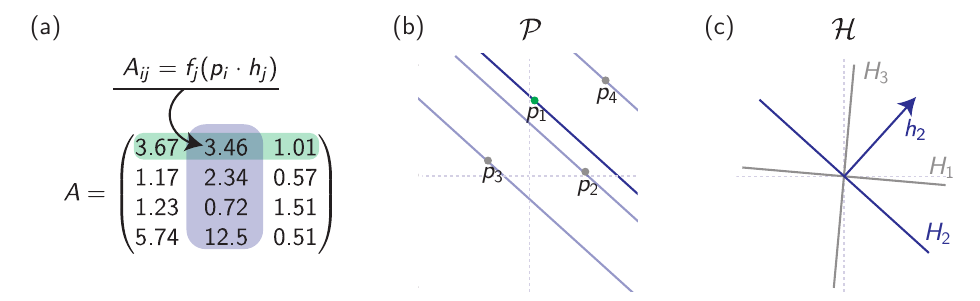}
\caption{\label{fig:matrix_and_point_arrs}
(a) The matrix $A$ from Example \ref{ex:distortion}. Since $A$ has monotone rank two, by Observation \ref{obs:sweep_orders}  there exist points $\cP = \{p_1, p_2, p_3, p_4 \}\subset \R^2$, hyperplane normals $\cH = \{h_1, h_2, h_3\} \subset \R^2$, and monotone functions $\cF = \{f_1, f_2, f_3\}$ such that  $A_{ij} = f_j(p_i \cdot h_j)$. 
(b) The point arrangement $\cP$. The order of entries in the second column of $B$ matches the order in which a hyperplane with normal vector $h_2$  sweeps past the points $p_1, p_2, p_3, p_4$. 
(c) The hyperplane arrangement $\cH$. The hyperplanes $H_1, H_2,$ and $H_3$ have normal vectors $h_1, h_2,$ and $h_3$. 
}

\end{figure}

%

We here review the basics of hyperplane arrangements. Recall that any vector $h\in \R^{d}$ defines a hyperplane $H = \{v \in \R^{d} \mid h \cdot v = 0\}$ and two half-spaces, $H^+  = \{v \in \R^{d} \mid h \cdot v > 0\}$ and  $H^-  = \{v \in \R^{d} \mid h \cdot v < 0\}$.  
Notice that $H$ must contain the origin. 
A (\emph{central) hyperplane arrangement}  is an arrangement of hyperplanes of this form, all of which must intersect at the origin.
An affine hyperplane is of the form $H = \{v\in \R^d\mid  v\cdot h = \theta\}$. Notice that each normal vector $h$ gives rise to a family of parallel affine hyperplanes. 
%

We can use the ordering of matrix entries to recover combinatorial information about the point arrangement $\cP$ and the hyperplane arrangement $\cH$, which we can use to estimate monotone  rank.
Following \cite{goodman1993allowable}, we define the \emph{sweep permutations} of a point arrangement, which we illustrate in Figure \ref{fig:sweep_perms}. 

\begin{defn}
Let $\cP  \subset \R^d$ be a set of points. Say $h \in \R^d$ is generic with respect to $\cP$ if  each hyperplane with normal vector $h$ contains at most one point of $\cP$. For $h$ generic with respect to $V$, 
define  $\pi_h$ to be the permutation which sends $i$ to the $i^{th}$ element encountered by a hyperplane swept in the direction of $h$. 
Define the set of \emph{sweep permutations} of $\cP$ to be the set \[\Pi(\cP) = \{\pi_h \mid h\in \R^d \mbox{ generic with respect to } \cP\}\]
\end{defn}

\begin{figure}[ht!]
\includegraphics[width = 6 in]{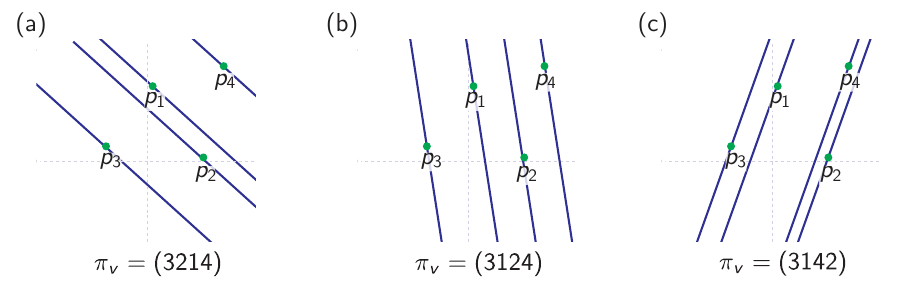}
\caption{\label{fig:sweep_perms}
Rotating a vector $v$ to obtain different sweep permutations of $\cP$. (a) The sweep permutation $\pi_v = (3214)$. (b) The sweep permutation $\pi_v = (3124)$. (c) The sweep permutation $\pi_v = (3124)$. }
\end{figure}

It is possible to recover a considerable amount of information about a point configuration $\cP$, including its dimension, from $\Pi(\cP)$ \cite{padrol2021sweeps}.  
Further, we can use the column orders of a matrix $A$ to recover a subset of $\Pi(\cP)$--our goal is to use this information to bound the rank of $A$.

\begin{obs}\label{obs:sweep_orders}
If $(\cP,\cH, \cF)$ is a rank $d$ representation of $A$, 
the order of entries within the $j^{th}$  column of $A$ is $\pi_{h_j}$, i.e. the order in which a hyperplane swept perpendicular to $h_j$ encounters the points $p_1, \ldots, p_n$. 
\end{obs}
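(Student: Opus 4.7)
The plan is essentially a definition chase, since the observation is a direct consequence of how the sweep permutation is defined. Given a rank $d$ representation $(\cP, \cH, \cF)$ of $A$, the defining property is $A_{ij} = f_j(p_i \cdot h_j)$. My first step would be to fix the sign convention (interpreting ``monotone'' as monotone increasing, with the sign absorbed into $h_j$ if necessary), so that the real numbers $A_{1j}, \ldots, A_{mj}$ appear in the same order as the real numbers $p_1 \cdot h_j, \ldots, p_m \cdot h_j$. This uses only the fact that a monotone (increasing) function on $\R$ is order-preserving.

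Next I would unpack the definition of $\pi_{h_j}$. The hyperplanes normal to $h_j$ form the one-parameter family $H_t = \{v \in \R^d : v \cdot h_j = t\}$, and sweeping in the direction of $h_j$ corresponds to letting $t$ increase from $-\infty$. The sweeping hyperplane $H_t$ passes through $p_i$ at precisely $t = p_i \cdot h_j$, so the order in which the points are encountered is the ascending order of the values $\{p_i \cdot h_j\}_{i=1}^m$. Combining this with the previous step, the encounter order coincides with the order of entries in column $j$ of $A$, which is the claim.

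A small piece of housekeeping is the genericity requirement: for $\pi_{h_j}$ to be a well-defined permutation we need $p_i \cdot h_j \neq p_{i'} \cdot h_j$ for $i \neq i'$, which is exactly the statement that the entries of column $j$ of $A$ are pairwise distinct. I would either fold this into the hypothesis (since monotone rank is usually discussed for matrices with distinct column entries) or note that ties in a column can be handled by an arbitrary but consistent tie-breaking rule, matching the analogous convention for $\pi_{h_j}$ when $h_j$ is not strictly generic.

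I do not expect any real obstacle: there is no geometric content beyond the fact that the signed projection $v \mapsto v \cdot h_j$ orders points the same way as a sweeping perpendicular hyperplane, and the monotonicity of $f_j$ transports that ordering to column $j$ of $A$. The only care needed is to be explicit about the sign/direction convention for ``monotone'' and the genericity of $h_j$.
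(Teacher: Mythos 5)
Your argument is correct and is exactly the reasoning the paper leaves implicit: the paper states this as an Observation with no proof, treating it as an immediate consequence of $A_{ij} = f_j(p_i \cdot h_j)$, monotonicity of $f_j$, and the fact that a hyperplane swept in direction $h_j$ meets $p_i$ at parameter value $p_i \cdot h_j$. Your added care about the increasing/decreasing sign convention and genericity (distinct column entries) is sensible housekeeping but does not change the substance.
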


Based on this observation, we define $\Pi(A)$ for a matrix: 

\begin{defn}
Let $A$ be a $m\times n$ matrix. For each column $j$ of the matrix, define a permutation $\pi_j$ which sends $i$ to the $i^{th}$ smallest entry of the $j^{th}$ column of $A$. Define $\Pi(A) = \{\pi_j \mid j\in [n]\}$. 
\end{defn}

Combining these definitions with our observation, we have $\Pi(A) \subseteq \Pi(\cP)$  whenever $\cP$ is the point arrangement of a rank $d$ representation of $A$. 
Thus  a matrix $A$  has monotone rank $d$ if and only if there exists a point arrangement $\cP$ in $\R^d$ such that $\Pi(A) \subseteq \Pi(\cP)$.
We can rephrase the monotone rank estimation problem as follows:
\begin{problem}
Given a set of permutations $\Pi(A)$ arising from a matrix, what is the minimal dimension of a point configuration $\cP$ such that $\Pi(A) \subseteq \Pi(\cP)$? 
\end{problem}

In order to make progress on this problem, in the next section we will encode the column permutations $\Pi(A) $ as sign vectors.

\section{Sign vectors and VC dimension}
\label{sec:sign_vectors}
Two simple bounds on monotone rank arise from Radon's theorem, and  are closely related to the VC dimension from statistical learning theory \cite{vapnik1971uniform}. 
In order to state this bound, we first give two ways to encode the order information of a matrix into a set of sign vectors. 
Here, a sign vector is an element of $\{+, 0, -\}^m$. 
A sign vector $X$ can also be written as a signed set $(X^+, X^-)$ where $X^+ = \{i \mid X_i = +\}$, $X^- = \{j \mid X_j = -\}$. 
Given either a point arrangement or a hyperplane arrangement, we can obtain a set of sign vectors known as the \emph{topes} of the arrangement. 
Our notation is inspired by oriented matroid theory, which we will review in Section \ref{sec:matroids}. 

\begin{defn}
The \emph{topes} of a point arrangement $\cP$,  written $\cT(\cP)$, are the sign vectors which arise as

\begin{align*}
(\sigma_H)_i = \begin{cases} 
+ & p_i \in H^+\\
- & p_i \in H^-
\end{cases}
\end{align*}
for a for a hyperplane $\cH$ containing no point of $\cP$.

The \emph{topes} of a central hyperplane arrangement $\cH$, written $\cT(\cH)$, are the sign vectors which arise as 
\begin{align*}
(\sigma_w)_i = \begin{cases} 
+ & w\in H_i^+\\
- &w \in H_i^-
\end{cases}
\end{align*}
for a point $w$ which does not lie on any hyperplane of $\cH$.
\end{defn}

The topes of an arrangement recover considerable geometric information, including the dimension, as we will show next. 
After that, we will show that we can recover subsets of $\cT(\cH)$ and $\cT(\cP)$ from $A$. 
We can then use the combinatorial properties of these subsets to bound the monotone rank of $A$. 

\subsection{Radon's theorem and VC dimension}

The classic version of Radon's theorem states that if $\cP  \subset \R^d$ is a set of $m$ points, $m \geq d + 2$, then $\cP$ has a  Radon partition: disjoint subsets whose convex hulls have a nonempty intersection \cite{radon1921mengen}. 
Here, we make use of two equivalent statements, and include a proof since our version diverges from the standard one. 

\begin{thm*}[Radon's theorem] 
\qquad
\begin{enumerate}[(a)]
\item 
A point arrangement $\cP$ is affinely independent if and only if for each disjoint $\cP_1, \cP_2 \subset \cP$, there is an affine hyperplane which separating the points in $\cP_1$ from the points in $\cP_2$. 

\item A hyperplane arrangement $\cH$ has linearly independent normal vectors if and only if for each disjoint $\cH_1, \cH_2 \subset \cH$, there is a point which is on the positive side of all hyperplanes of $\cH_1$ and on the negative side of  all hyperplanes of $\cH_2$. 
\end{enumerate}
\end{thm*}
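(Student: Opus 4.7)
The plan is to treat (a) and (b) as parallel statements and prove each direction with a short linear-algebraic argument; (a) actually follows from (b) by the standard homogenization $p_i \mapsto \tilde p_i := (p_i, 1) \in \R^{d+1}$, which converts affine independence in $\R^d$ to linear independence in $\R^{d+1}$ and affine separators $\{h \cdot v = \theta\}$ into linear ones with normal $(h,-\theta)$. So the real content is in (b), and I would either prove (b) and deduce (a), or run both arguments symmetrically (I suspect the authors do the latter, so that's what I'll outline).

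For the forward direction of (b), suppose the normals $h_1,\dots,h_m$ are linearly independent, so $m \le d$. I would extend to a basis $h_1,\dots,h_d$ of $\R^d$ and take the dual basis $h_1^*,\dots,h_d^*$ (i.e.\ $h_i^* \cdot h_j = \delta_{ij}$). Given disjoint $\cH_1,\cH_2 \subset \cH$ with index sets $I_+, I_-$, set
\[
w \;=\; \sum_{i \in I_+} h_i^* \;-\; \sum_{j \in I_-} h_j^*.
\]
Then $w \cdot h_i = 1$ for $i \in I_+$ and $w \cdot h_i = -1$ for $i \in I_-$, so $w$ is the required point. For the forward direction of (a), I would either run the analogous argument with barycentric coordinates (extend $\cP$ to a full affinely independent set of $d+1$ points, take the barycentric coordinate functions $\lambda_i$, and set $f = \sum_{I_+} \lambda_i - \sum_{I_-} \lambda_j$), or simply invoke (b) on the lifted arrangement $\tilde\cP \subset \R^{d+1}$.

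For the reverse direction of (b), suppose $h_1,\dots,h_m$ are linearly dependent, pick a nontrivial relation $\sum_i c_i h_i = 0$, and let $I_+ := \{i : c_i > 0\}$, $I_- := \{i : c_i < 0\}$; these are disjoint and not both empty. If a point $w$ existed with $w \cdot h_i > 0$ for $i \in I_+$ and $w \cdot h_i < 0$ for $i \in I_-$, then every term of $\sum_i c_i (w \cdot h_i)$ indexed by $I_+ \cup I_-$ would be strictly positive while the remaining terms vanish, giving $w \cdot \sum_i c_i h_i > 0$; but this inner product equals $0$, a contradiction. The reverse direction of (a) is the same argument applied to an affine dependence $\sum \lambda_i p_i = 0$, $\sum \lambda_i = 0$: evaluating any affine separator $f(x) = h\cdot x - \theta$ on the relation forces $\sum_i \lambda_i f(p_i) = 0$, which contradicts $\lambda_i f(p_i) > 0$ on $I_+ \cup I_-$.

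There is no serious obstacle here, just two small bookkeeping points to be careful about. The first is the asymmetry between (a) and (b): the affine setting forces $m \le d+1$, whereas the linear setting forces $m \le d$, and the argument for (a) needs both $\sum \lambda_i p_i = 0$ and $\sum \lambda_i = 0$, whereas (b) only uses the first. The second is that the statement of (b) does not require the separating point $w$ to avoid the hyperplanes in $\cH \setminus (\cH_1 \cup \cH_2)$, which is what makes the dual-basis construction clean; it is worth flagging this so the reader does not expect a stronger genericity conclusion than the theorem actually gives.
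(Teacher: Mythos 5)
Your proposal is correct and follows essentially the same route as the paper: the paper likewise proves the linear statement (that a family of vectors is linearly independent iff every disjoint pair of subfamilies can be given prescribed strict signs by some $w$) and then obtains the affine case (a) by the homogenization $p_i \mapsto (p_i,1)$. The only cosmetic difference is that in the forward direction you construct $w$ explicitly via a dual basis, whereas the paper argues that the map $w \mapsto Vw$ is surjective and picks a preimage of a suitable sign pattern; your dependence-relation argument for the reverse direction matches the paper's.
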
 
\begin{proof}
We first prove that a set of vectors  $\cV = \{v_1, \ldots, v_m\} \subseteq \R^{d}$ is linearly independent if and only if for all disjoint $\cV_1, \cV_2 \subseteq \cV$, there exists a vector $w\in \R^d$ such that $w \cdot v> 0$ for all $v \in \cV_1$, $w\cdot v < 0$ for all $v \in \cV_2$. 
We will then show this is equivalent to both (a) and (b). 

First, suppose $\cV$ is linearly independent. Let $V$ be the matrix with rows $v_1, \ldots, v_m$. 
Then the linear map $\R^d\to \R^m$ given by the matrix $V$ has image $\R^m$. 
Then for all $x\in \R^m$, there exists $w\in \R^d$ such that $V w = x$.  
Each coordinate of $x$ is equal to $v\cdot w$ for some $v\in \cV$.  
Thus for any disjoint $\cV_1, \cV_2 \subseteq \cV$, we can find a $w$ so that $w \cdot v > 0$ for $v \in \cV_1$ and $w\cdot v < 0 $ for all $v \in \cV_2$. 
Now, suppose that  $\cV$ is not  linearly independent.
Then there exist $\lambda_1, \ldots, \lambda_m$, not all zero, such that $\lambda_1 v_1 + \cdots+ \lambda_m  v_m = 0$. 
Let $\cV_1 = \{v \mid \lambda_i > 0\}$, $\cV_2 = \cV \setminus \cV_1$. 
Then $\sum_{v_i \in \cV_1} \lambda_i v_i = \sum_{v_j \in \cV_2} (-\lambda_j) v_j = p $. 
That is, there exists a point $p$ which we can express as a positive linear combination of both  $\cV_1$ and $\cV_2$. 
 Now, let $w \in \R^d$. If $w \cdot v> 0$ for all $v\in \cV_1$, then $w\cdot p > 0$. 
 Thus we cannot have $w \cdot v< 0$ for all $v\in \cV_2$, since this would imply $w\cdot p < 0$. 

Now, we use this fact to prove (a). 
Let  $\cP = \{p_1, \ldots, p_m\}\subseteq \R^d$. Let $\hat \cP = \{\hat p_1, \ldots, \hat p_m\} \subseteq \R^{d+1}$ be the points obtained by appending a 1 as the last coordinate of each point in $\cP$. 
Recall that $\cP$ is affinely independent if and only if $\hat \cP$ is linearly independent. 
Now, notice that a vector $w \in \R^{d+1}$ such that $w\cdot \hat p > 0$ for all $\hat p \in \hat \cP_1$, $w\cdot \hat p < 0$ for all $\hat p \in \hat \cP_2$ corresponds to an affine hyperplane separating $\cP_1$ from $\cP_2$. 

Finally, we prove $(b)$. Notice that a vector $p \in \R^{d}$
such that $p \cdot h > 0$ for all $h\in \cH_1$, $p \cdot  h < 0$ for all $h \in \cH_2$ corresponds to a point which is on the positive side of all hyperplanes in $\cH_1$ and on the negative side of all hyperplanes in $\cH_2$. 

%
%
\end{proof}

An alternate way to state these results is that a set of $m$ points is affinely independent if and only if it has $2^m$ topes, i.e. each of the $2^m$ elements of its power set can be separated from its complement by a hyperplane. 
Likewise, a set of $n$ hyperplanes has linearly independent normal vectors if and only if it has $2^n$ topes, i.e.  it splits space into $2^n$ full-dimensional chambers. 
Thus we can determine the dimension of a point configuration or hyperplane arrangement from the topes.
%
This is formalized by the \emph{VC dimension}, introduced in \cite{vapnik1971uniform}. 
While the VC dimension is typically defined for a family of sets or of binary vectors, we change the notation to consider families of sign vectors with no zero entries.


\begin{defn}\label{def:vc_sign}
Let $\Sigma \subseteq\{+, -\}^n$ be a set of sign vectors on $[n]$ with no zero entries. For $\tau \subseteq [n]$, define the restriction $\Sigma|_\tau$ to be the restriction of each sign vector in $\Sigma$ to indices in $\tau$.  We say that $\tau$ is \emph{shattered} by $\Sigma$ if  $\Sigma|_{\tau}$ contains  $2^{|\tau|}$ unique elements.  The \emph{VC dimension} of $\Sigma$, written $\dim_{\VC}(\Sigma)$ is the maximum size of a set $\tau$ shattered by $\Sigma$.  
\end{defn}

\begin{lem}\label{lem:vc_point}
The VC dimension of the set of topes $\mathcal T(\mathcal P)$ of a point arrangement with affine span $\R^{d}$ is $d+1$.
\end{lem}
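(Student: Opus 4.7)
The plan is to establish both inequalities $\dim_{\VC}(\cT(\cP)) \geq d+1$ and $\dim_{\VC}(\cT(\cP)) \leq d+1$ as direct translations of Radon's theorem (a) into the language of VC dimension. Since every tope of $\cP$ arises from an affine hyperplane containing no point of $\cP$, topes have no zero entries, so Definition~\ref{def:vc_sign} applies verbatim.

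For the lower bound, I would use the affine-span hypothesis to choose $d+1$ affinely independent points $p_{i_1},\ldots,p_{i_{d+1}}\in\cP$, and set $\tau=\{i_1,\ldots,i_{d+1}\}$. By Radon's theorem (a), for every partition $\tau=\tau_1\sqcup\tau_2$ there exists an affine hyperplane $H$ with $p_i\in H^+$ for $i\in\tau_1$ and $p_i\in H^-$ for $i\in\tau_2$. To realize this sign pattern as the restriction of an actual tope in $\cT(\cP)$, I would then argue that $H$ can be perturbed to also miss every point of $\cP\setminus\cP_\tau$: the conditions ``$H$ avoids $p_j$'' and ``$H$ has a specified sign on $p_i$'' are both open in the coefficients of the defining affine functional, and each of the finitely many offending points can be removed by an arbitrarily small perturbation without disturbing the prescribed signs on $\cP_\tau$. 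This produces a tope whose restriction to $\tau$ realizes the given partition, so $\tau$ is shattered and $\dim_{\VC}(\cT(\cP))\geq d+1$.

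For the upper bound, suppose $\tau\subseteq[m]$ is shattered by $\cT(\cP)$. Then for every partition $\tau=\tau_1\sqcup\tau_2$ there is a tope of $\cP$ restricting to $+$ on $\tau_1$ and $-$ on $\tau_2$, which by definition is witnessed by an affine hyperplane avoiding all of $\cP$ (hence all of $\cP_\tau$) and separating the two parts. Applying Radon's theorem (a) to $\cP_\tau$, we conclude that $\cP_\tau$ is affinely independent, so $|\tau|\leq d+1$. Combining the two bounds gives $\dim_{\VC}(\cT(\cP))=d+1$.

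The only delicate point is the perturbation argument in the lower bound — everything else is an immediate rephrasing of Radon's theorem. If a cleaner presentation is desired, one can replace the perturbation step by noting that the set of affine hyperplanes realizing a fixed sign pattern on $\cP_\tau$ is open and nonempty in the space of affine functionals (modulo scaling), while hyperplanes passing through some $p_j\in\cP\setminus\cP_\tau$ form a proper linear subspace, so a generic element in that open set already avoids $\cP\setminus\cP_\tau$.
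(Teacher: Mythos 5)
Your proof is correct and follows essentially the same route as the paper: the lower bound comes from choosing $d+1$ affinely independent points and applying Radon's theorem (a) to shatter them, and the upper bound comes from the fact that any shattered set must be affinely independent, hence of size at most $d+1$. The one difference is that you explicitly supply the perturbation/genericity step needed so that the separating hyperplane avoids all of $\cP$ (so the sign pattern really extends to a tope of $\cT(\cP)$), a point the paper's proof leaves implicit; this is a welcome bit of extra care rather than a different argument.
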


\begin{proof}
Let $\cP$ be a point arrangement in $\R^{d}$ which has affine span $\R^{d}$. 
Then there is a set of $d + 1$ points $\cP' \subseteq \cP$ which are affinely independent.  
Then by Radon's theorem, for each $\cP_1, \cP_2 \subseteq \cP'$ there is an affine hyperplane separating the points of $\cP_1, \cP_2$. 
Thus, all $2^{d+1}$ choices of $\cP_1$ and $\cP_2$ give a unique member of $\cT(\cP)|_{\cP'}$. 
This means that the VC dimension of $\cT(\cP)$ is at least $d+1$. 
Since any collection of more than $d+1$ points is \emph{not} affinely independent, Radon's theorem guarantees that at least one partition $\cP_1, \cP_2 \subseteq \cP'$ which cannot be separated, thus the VC dimension is exactly $d+1$. 
\end{proof}

\begin{lem}\label{lem:vc_plane}
The topes $\mathcal T(\mathcal H)$ of a hyperplane arrangement whose normal vectors span $\R^{d}$ have VC dimension $d$. 
\end{lem}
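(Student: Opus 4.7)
The plan is to mirror the proof of Lemma \ref{lem:vc_point} essentially verbatim, but use part (b) of Radon's theorem in place of part (a). The asymmetry that drops the VC dimension from $d+1$ to $d$ comes from the fact that central hyperplane arrangements correspond to \emph{linear} (not affine) independence of normal vectors, so there is no need to append a $1$-coordinate as in the point-arrangement case.

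Concretely, I would first establish the lower bound. Since the normal vectors of $\mathcal{H}$ span $\R^d$, I can extract a subcollection $\mathcal{H}' \subseteq \mathcal{H}$ of $d$ hyperplanes whose normal vectors are linearly independent. Applying Radon's theorem (b) to $\mathcal{H}'$: for every disjoint partition $\mathcal{H}_1, \mathcal{H}_2 \subseteq \mathcal{H}'$ there exists a point lying on the positive side of every hyperplane in $\mathcal{H}_1$ and on the negative side of every hyperplane in $\mathcal{H}_2$. Each such point determines a tope of $\mathcal{H}$, whose restriction to $\mathcal{H}'$ is the corresponding $\pm$ pattern. Hence all $2^d$ sign vectors on $\mathcal{H}'$ appear in $\mathcal{T}(\mathcal{H})|_{\mathcal{H}'}$, so $\mathcal{H}'$ is shattered and $\dim_{\VC}(\mathcal{T}(\mathcal{H})) \geq d$.

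For the upper bound, suppose $\mathcal{H}'' \subseteq \mathcal{H}$ has more than $d$ elements. Then the normal vectors of $\mathcal{H}''$ are linearly dependent, so by the contrapositive of Radon's theorem (b) there exists a disjoint partition $\mathcal{H}_1, \mathcal{H}_2 \subseteq \mathcal{H}''$ admitting no point simultaneously on the positive side of $\mathcal{H}_1$ and the negative side of $\mathcal{H}_2$. The corresponding sign pattern is therefore absent from $\mathcal{T}(\mathcal{H})|_{\mathcal{H}''}$, so $\mathcal{H}''$ is not shattered. Combining the two bounds yields $\dim_{\VC}(\mathcal{T}(\mathcal{H})) = d$.

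There is really no main obstacle; the only subtlety is a bookkeeping one: one must verify that a point $w$ not lying on any hyperplane of $\mathcal{H}$ can be perturbed without changing its sign pattern on $\mathcal{H}'$, so that the witness points supplied by Radon's theorem (b), which a priori only avoid the hyperplanes in $\mathcal{H}'$, can be chosen to lie in the complement of the full arrangement $\bigcup \mathcal{H}$ and thus genuinely produce topes in $\mathcal{T}(\mathcal{H})$. This is a standard openness argument: the set of points realizing a given nonzero sign pattern on $\mathcal{H}'$ is open, and removing the finitely many hyperplanes of $\mathcal{H} \setminus \mathcal{H}'$ still leaves an open (in fact dense) subset, from which a valid witness can be drawn.
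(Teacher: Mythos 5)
Your proposal is correct and follows essentially the same route as the paper: extract $d$ hyperplanes with linearly independent normals, use Radon's theorem (b) to shatter them, and use linear dependence of any larger collection to rule out shattering. If anything, your write-up is slightly more careful than the paper's, since you make the upper-bound step explicit in terms of linearly dependent normals and note the (routine) openness argument needed so the witness points avoid all of $\mathcal{H}$, not just $\mathcal{H}'$.
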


\begin{proof}
Let $\cH$ be a hyperplane arrangement whose normal vectors spans $\R^{d}$. 
Then there must be a set $\cH'$ of $d$ hyperplanes whose normal vectors are linearly independent. 
Then by Radon's theorem, for each disjoint $\cH_1, \cH_2 \subseteq \cH'$, there exists a point which is on the positive side of all hyperplanes of $\cH_1$ and on the negative side of all hyperplanes of $\cH_2$. 
Thus, all $2^{d}$ choices of $\cH_1$ and $\cH_2$ give unique elements of $\cT(\cH)|_{\cH'}$. 
Since any collection of more than $d+1$ points is \emph{not} affinely independent, Radon's theorem guarantees that at least one partition is missing, thus the VC dimension is exactly $d$. 
\end{proof}

Notice that the VC dimension of a point arrangement in $\R^d$ is $d+1$, while the VC dimension of a hyperplane arrangement is $d$, due to the difference between affine and linear independence. 
In both cases, we can use the VC dimension of the set of topes as a way to ``sense" the dimension of the point or hyperplane arrangement they arose from. 
We will now show it is also possible to do this using subsets of topes  from matrices. 

\subsection{Sign vectors from matrices}

In this section, we show that if $(\cP, \cH, \cF)$ is a rank $d$ realization of a matrix $A$, it is possible to recover subsets of $\cT(\cP)$ and $\cT(\cH)$ from $A$, and to compute a lower bound on $\monr(A)$  based on the VC dimension of these subsets. 

\subsubsection{Threshold topes}
As we sweep a hyperplane with normal vector $h$ past the points of $\cP$, stopping at any time induces a partition of the points.
If $(\cP, \cH, \cF)$ is a rank $d$ representation of $A$, then the set of partitions of the points of $\cP$ induced by hyperplanes whose normal vectors lie in  $\cH$ can be recovered from the order of entries of $A$. This is a subset of $\cT(\cP)$. 

\begin{defn}
Let $A$ be a $m\times n$ matrix. For each each column $j$ of $A$ and threshold $\theta \in \R \setminus \{a_{ij}\mid i\in [m]\}$, we obtain a sign vector 
\[\sigma_{j}(\theta) := (\sign(a_{1j} - \theta), \ldots, \sign (a_{mj} - \theta) ).\]
Define the set of \emph{threshold topes} of $A$ as the set 
\[ \Sigma_{\thresh}(A) = \{\pm\sigma_j(\theta) \mid j\in [n], \theta \in  \R \setminus \{a_{ij}\mid i\in [m]\}\} \]
\end{defn}

The threshold topes recover the partitions of $\cP$ induced by affine hyperplanes whose normal vectors lie in $\cH$, and are thus a subset of $\cT(\cP)$. 
\begin{lem}\label{lem:thresh}
If $A$ has a rank $d$ representation $(\cP, \cH, \cF)$, then
$\Sigma_{\thresh}(A) \subseteq \cT(\cP)$. 
\end{lem}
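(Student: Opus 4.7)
The plan is to unwind both definitions and use the strict monotonicity of $f_j$ to translate a threshold on the column values $a_{ij}$ into a threshold on the inner products $p_i \cdot h_j$, producing an affine hyperplane of $\R^d$ whose tope is exactly $\sigma_j(\theta)$.

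Concretely, I would fix a column index $j \in [n]$ and a threshold $\theta \notin \{a_{ij} : i \in [m]\}$, and abbreviate $x_i := p_i \cdot h_j$ so that $a_{ij} = f_j(x_i)$. The aim is to produce $\theta' \in \R$ such that
\[
\sign(f_j(x_i) - \theta) = \sign(x_i - \theta') \qquad \text{for every } i \in [m],
\]
together with the non-degeneracy condition $x_i \neq \theta'$ for all $i$. Granted such a $\theta'$, the affine hyperplane $H := \{v \in \R^d : v \cdot h_j = \theta'\}$ contains no point of $\cP$, and its associated sign vector in $\cT(\cP)$ coincides with $\sigma_j(\theta)$, so $\sigma_j(\theta) \in \cT(\cP)$. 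The other element $-\sigma_j(\theta)$ of $\Sigma_{\thresh}(A)$ is handled by reversing the sign convention on $H$ (equivalently, replacing $h_j$ by $-h_j$), which shows $\cT(\cP)$ is closed under negation.

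To build $\theta'$, I would partition the row indices using the hypothesis on $\theta$ as $I^+ := \{i : a_{ij} > \theta\}$ and $I^- := \{i : a_{ij} < \theta\}$, and choose any $\theta'$ strictly between $\max_{i \in I^-} x_i$ and $\min_{i \in I^+} x_i$, with the convention that empty maxima equal $-\infty$ and empty minima equal $+\infty$. Strict monotonicity of $f_j$ guarantees either $\max_{i \in I^-} x_i < \min_{i \in I^+} x_i$ (if $f_j$ is increasing) or $\max_{i \in I^+} x_i < \min_{i \in I^-} x_i$ (if $f_j$ is decreasing), so such a $\theta'$ always exists; in the decreasing case the sign vector produced is $-\sigma_j(\theta)$, which belongs to $\Sigma_{\thresh}(A)$ by the $\pm$ in its definition and to $\cT(\cP)$ by closure under negation.

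The main technical obstacle is minor but worth flagging: ``monotone'' need not entail continuity or surjectivity of $f_j$, so there is no meaningful $f_j^{-1}(\theta)$ in general, and one cannot simply take $\theta' = f_j^{-1}(\theta)$. The construction above sidesteps this by choosing $\theta'$ geometrically, interpolating between the empirical inner products $x_i$ rather than attempting to invert $f_j$ literally; this also has the side benefit of automatically enforcing the non-degeneracy $x_i \neq \theta'$ for all $i$.
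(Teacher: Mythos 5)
Your proposal is correct and follows essentially the same route as the paper: fix a column $j$ and threshold $\theta$, convert it into a threshold $\theta'$ on the inner products $p_i \cdot h_j$, and read off $\sigma_j(\theta)$ as the tope of the affine hyperplane $\{v \in \R^d : v \cdot h_j = \theta'\}$, with the sign $\pm$ handled by flipping the normal. The one difference is that the paper simply takes $\theta' = f_j^{-1}(\theta)$, implicitly assuming $f_j$ is invertible at $\theta$, whereas your interpolation between $\max_{i \in I^-} (p_i \cdot h_j)$ and $\min_{i \in I^+} (p_i \cdot h_j)$ avoids that assumption (and the degenerate case $\theta'$ hitting some $p_i \cdot h_j$), so your version is the more careful one.
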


\begin{proof}
If $(\cP, \cH, \cF)$ is a rank $d$ representation of $A$, then $\sign (a_{ij} - \theta) = \sign(p_i \cdot h_j - f_j\inv(\theta))$. 
Let $H_{j}(\theta)$ be the hyperplane $\{v\mid v \cdot h_j -  f_j\inv(\theta) = 0\}$. 
Then  $\sign(p_i \cdot h_j - f_j\inv(\theta)) = +$ if $p_i$ is on the positive side of  $H_{j}(\theta)$ and   $\sign(p_i \cdot h_j - f_j\inv(\theta)) = -$ if $p_i$ is on the negative side of  $H_{j}(\theta)$. 
Thus $\sigma_j(\theta) \in \cT(\cP)$. 
\end{proof}

\begin{ex}\label{ex:threshold}
We consider again the matrix $A$ from Example \ref{ex:distortion}. 
We have 
\begin{dmath*}
\Sigma_{\thresh}(A) = \{+--+, -++-, +-+-, -+-+, ++++, ----, ++-+, --+-, +++-, ---+, +-++, -+--\}
\end{dmath*}
Notice that each element $\sigma \in \Sigma_{\thresh}(A)$ corresponds to an element of $\cT(\cP)$, i.e. a way to partition the points using a hyperplane. 
We illustrate this in Figure \ref{fig:threshold_vectors}.
\end{ex}

\begin{figure}[ht!]
\includegraphics[width = 4 in]{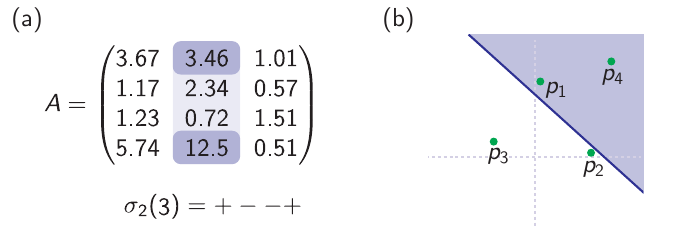}
\caption{\label{fig:threshold_vectors}
The threshold vectors $\Sigma_{\thresh}(A)$. (a) We have $\sigma_{2}(3) = +--+$, since only the first and fourth entries in the second column of $A$ are above the threshold 3.  (b) Only the points $p_1$ and $p_4$ are on the positive side of the illustrated hyperplane. The illustrated hyperplane has normal vector $h_2$. 
}
\end{figure} 

If the set of normal vectors is large enough, then the threshold topes recover $\cT(\cP)$.
Since the VC dimension of $\cT(\cP)$ can be used to recover the dimension of $\cP$, we can also use  $\Sigma_{\thresh}(A)$ to give a lower bound for the monotone rank of $A$. 

\begin{prop}\label{prop:vc_rank_thresh}
For any matrix $A$, 
\[ \dim_{\VC}(\Sigma_{\thresh}(A)) -1 \leq \monr(A).\]
\end{prop}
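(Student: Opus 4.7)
The plan is to chain together Lemma \ref{lem:thresh} and Lemma \ref{lem:vc_point}, using the elementary fact that VC dimension is monotone with respect to inclusion of sign vector sets. Let $d = \monr(A)$ and fix a rank $d$ representation $(\cP, \cH, \cF)$ of $A$ with $\cP \subset \R^d$. By Lemma \ref{lem:thresh}, every threshold tope of $A$ appears as a tope of $\cP$, so
\[ \Sigma_{\thresh}(A) \subseteq \cT(\cP). \]

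Next I would record that VC dimension is monotone: if $\Sigma \subseteq \Sigma'$ and a subset $\tau$ is shattered by $\Sigma$, then the same $\tau$ is shattered by $\Sigma'$. This is immediate from Definition \ref{def:vc_sign} since $\Sigma|_\tau \subseteq \Sigma'|_\tau$, so containing all $2^{|\tau|}$ sign patterns on $\tau$ for the smaller family forces the same for the larger. Therefore
\[ \dim_{\VC}(\Sigma_{\thresh}(A)) \leq \dim_{\VC}(\cT(\cP)). \]

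The final ingredient is Lemma \ref{lem:vc_point}, which tells us that a point arrangement in $\R^d$ has topes of VC dimension $d+1$ when its affine span is all of $\R^d$. I would note that if the affine span of $\cP$ is a proper affine subspace of dimension $d' < d$, the same argument gives VC dimension $d'+1 \leq d+1$; so in all cases $\dim_{\VC}(\cT(\cP)) \leq d + 1$. Combining the two inequalities yields
\[ \dim_{\VC}(\Sigma_{\thresh}(A)) \leq d + 1 = \monr(A) + 1, \]
which rearranges to the desired bound.

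Nothing here looks genuinely hard; the statement is essentially a packaging of prior lemmas. The only two small points to be careful about are (i) verifying that the elements of $\Sigma_{\thresh}(A)$ really are sign vectors with no zero entries, which holds because $\theta$ is chosen to avoid the entries $\{a_{ij}\}$, so Definition \ref{def:vc_sign} applies; and (ii) explicitly handling the possibility that the points of $\cP$ lie in a proper affine subspace, so that Lemma \ref{lem:vc_point} is applied to that subspace rather than directly to $\R^d$. Neither obstacle is substantive, and the whole proof should fit comfortably in a short paragraph.
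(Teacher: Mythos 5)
Your proof is correct and follows essentially the same route as the paper: apply Lemma \ref{lem:thresh} to get $\Sigma_{\thresh}(A) \subseteq \cT(\cP)$, use monotonicity of VC dimension, and conclude with Lemma \ref{lem:vc_point}. Your added care about the case where $\cP$ spans a proper affine subspace (and the check that the threshold topes have no zero entries) is a minor refinement the paper's proof glosses over, but not a different argument.
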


\begin{proof}
Suppose $A$ has monotone rank $d$. 
Then $A$ has a rank $d$ realization $(\cP, \cH, \cF)$. 
By Lemma \ref{lem:thresh}, 
$\Sigma_{\thresh}(A) \subseteq \cT(\cP)$. 
Thus, $$ \dim_{\VC}(\Sigma_{\thresh}(A)) \leq \dim_{\VC}(\cT(\cP)).$$
Now, by Lemma \ref{lem:vc_point}, $\dim_{\VC}(\cT(\cP)) = d + 1$. 
Thus \[\dim_{\VC}(\Sigma_{\thresh}(A)) \leq d+1.\] 

\end{proof}

Motivated by this, we define the Radon rank of a matrix as a lower bound for underlying rank. The Radon rank is also defined in \cite{curto22novel}. 

\begin{defn}
Define the \emph{Radon rank} of a matrix $A$ as 
\[\radr(A) =  \dim_{\VC}(\Sigma_{\thresh}(A)) -1\]
\end{defn}

\noindent Notice that $\radr(A) \leq \mrank(A)$.
Thus, we have proved the first part of Theorem \ref{ithm:rad_vc}.  

The set of threshold topes $\Sigma_{\thresh}(A)$ does not determine $\Pi(A)$, since the set of topes $\cT(\cP)$ of a point arrangement does not fully specify the sweep permutations $\Pi(\cP)$. 
For instance, in Figure \ref{fig:diff_more_than_thresh} we give an example of two point configurations which have the same topes, but different sets of sweep permutations. 
Motivated by this, we will define a different set of sign vectors which does retain all the information of $\Pi(A)$. 

\begin{figure}[ht!]
\includegraphics[width = 6 in]{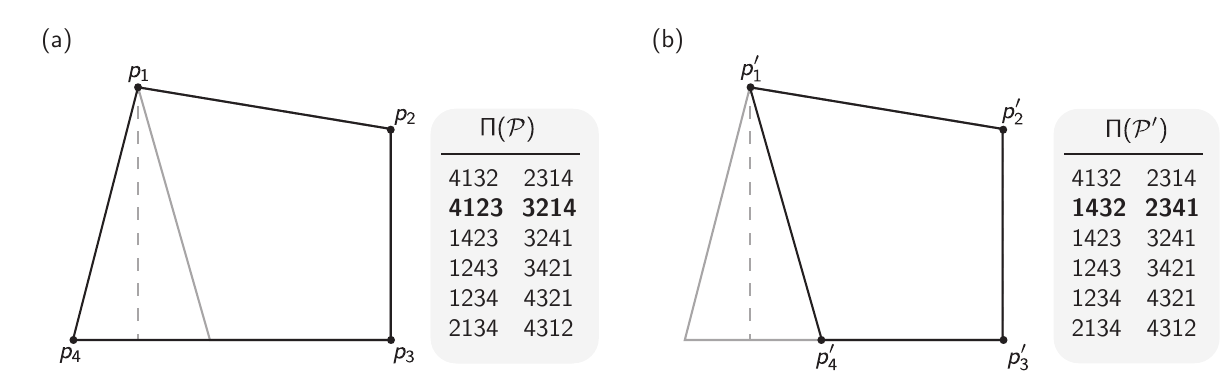}
\caption{\label{fig:diff_more_than_thresh}
Two point arrangements with the same set of topes, $\{+, -\}^4 \setminus \{+-+-, -+-+\}$, but differing sets of sweep permutations. (a) 4132 and 3214 are permutations of $\Pi(\cP)$, but not $\Pi(\cP')$. (b) 1432 and 2341 are permutations of $\Pi(\cP')$, but not $\Pi(\cP)$. 
As we slide  the point $p_4$ along the line connecting it to $p_3$, the change in  $\Pi(\cP)$ happens when $p_4$ crosses the dashed line, and the line segment connecting $p_1$ to $p_4$ becomes parallel to the line segment connecting $p_2$ to $p_3$. 
}
\end{figure}

\subsubsection{Difference topes}
If $f_j$ is a monotone function, then $a_{ij} > a_{kj}$ if and only if $f_j(a_{ij}) > f_j(a_{kj})$. 
Equivalently, 
$\sign(a_{ij} - a_{kj}) = \sign(f_j(a_{ij}) -  f_j(a_{kj}))$.
 We can use this fact to recover a subset of $\cT(\cH)$ from $A$ whenever $(\cP, \cH, \cF)$ is a rank $d$ realization of $A$. 

\begin{defn} For $i\neq k \in [n]$, we define the sign vector \[\sigma_{ik} := (\sign(a_{i1} -  a_{k1}), \ldots, \sign(a_{in} -  a_{kn})).\]
Define the set of \emph{difference topes} of $A$ as 
\[\Sigma_{\diff}(A) := \{\sigma_{ik} \mid {i, k}\in [m]\}\]
\end{defn}

This set of sign vectors appears in \cite{dunn2018signed} as the set of observable sign vectors. 
\begin{ex}
Returning to the matrix $A$ from Example \ref{ex:distortion}, we have difference topes 
\begin{align*}
\Sigma_{\diff}  = \{+++, ++-, --+, ---, -+-, +-+\}.
\end{align*}
\end{ex}

The difference topes lend themselves to a different geometric interpretation of $\cP$ and $\cH$, with elements of $\cH = \{h_1, \ldots, h_n\}$ as normal vectors of a central hyperplane arrangement $\cH = \{H_1, \ldots, H_n\}$, and the elements of $\cP - \cP = \{p_i - p_k \mid p_i, p_k \in \cP\}$ as points.  
In this view, the difference topes recover a subset of the topes of $\cH$. This is illustrated in Figure \ref{fig:hyperplanes_diff}. 
\begin{lem}\label{lem:diff}
If $A$ has a rank $d$ representation $(\cP, \cH, \cF)$, then
$\Sigma_{\diff}(A) \subseteq \cT(\cH)$. 
\end{lem}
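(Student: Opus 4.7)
The plan is to unpack the definition of each difference tope using the given rank $d$ representation and exploit the order-preserving nature of the monotone functions $f_j$ to rewrite its coordinates as signs of inner products.

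First I would fix $i\neq k\in[m]$ and examine the $j$-th coordinate of $\sigma_{ik}$. By definition of the representation, $a_{ij}=f_j(p_i\cdot h_j)$ and $a_{kj}=f_j(p_k\cdot h_j)$. Since $f_j$ is monotone, it preserves the sign of differences, so
\[
\sign(a_{ij}-a_{kj})=\sign\bigl(f_j(p_i\cdot h_j)-f_j(p_k\cdot h_j)\bigr)=\sign\bigl(p_i\cdot h_j-p_k\cdot h_j\bigr)=\sign\bigl((p_i-p_k)\cdot h_j\bigr).
\]
Setting $w_{ik}:=p_i-p_k\in\R^d$, this says that the $j$-th coordinate of $\sigma_{ik}$ records on which side of the central hyperplane $H_j=\{v\in\R^d\mid v\cdot h_j=0\}$ the point $w_{ik}$ lies. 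Comparing with the definition of $\cT(\cH)$ for a central hyperplane arrangement with normals $h_1,\dots,h_n$, we obtain $\sigma_{ik}=\sigma_{w_{ik}}$, which is the tope of $\cH$ at $w_{ik}$.

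The only subtlety is genericity: for $\sigma_{ik}$ to be an honest tope (no zero entries), $w_{ik}$ must miss every $H_j$, i.e.\ $a_{ij}\neq a_{kj}$ for every $j$. The main obstacle in the write-up is handling the degenerate case when some column has a repeated entry, since then $\sigma_{ik}$ has a zero coordinate that cannot arise from a tope in the strict sense. I would address this either by noting that a rank $d$ representation can be perturbed to a generic one without decreasing its rank (or changing the nonzero coordinates of $\sigma_{ik}$), or by adopting the convention — consistent with the sister statement Lemma \ref{lem:thresh} — that we only form $\sigma_{ik}$ when all coordinates are nonzero. With that caveat handled, we conclude $\Sigma_{\diff}(A)\subseteq\cT(\cH)$.
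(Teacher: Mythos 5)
Your proof is correct and takes essentially the same route as the paper: monotonicity of $f_j$ yields $\sign(a_{ij}-a_{kj})=\sign\bigl((p_i-p_k)\cdot h_j\bigr)$, so each $\sigma_{ik}$ is exactly the tope of the central arrangement $\cH$ at the difference point $p_i-p_k$. Your remark about possible zero coordinates when a column has repeated entries is a genuine subtlety that the paper's own proof passes over silently, and either of your proposed fixes (perturbation or the genericity convention) is an acceptable way to handle it.
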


\begin{proof}

If $(\cP, \cH, F)$ is a rank $d$ representation of $A$, then 
\begin{align*}
a_{ij} - a_{kj} &= f_j(p_i \cdot h_j) - f_j(p_k \cdot h_j)\\
\sign(a_{ij} - a_{kj}) &= \sign(p_i \cdot h_j - p_k \cdot h_j) = \sign((p_i-p_k) \cdot h_j).
\end{align*}
Thus
\[\sigma_{ik}= (\sign((p_i - p_k)\cdot h_1), \ldots, \sign((p_i - p_k)\cdot h_n)).\]
Then $\sign((p_i - p_k) \cdot h_j) = +$ if the point $(p_i - p_k)$ is on the negative side of the hyperplane $H_j$, and $\sign((p_i - p_k) \cdot h_j) = -$ if the point $(p_i - p_k)$ is on the negative side of $H_j$. 
Thus $\sigma_{ik}\in \cT(\cH)$.

\end{proof}

\begin{figure}[ht!]
\includegraphics[]{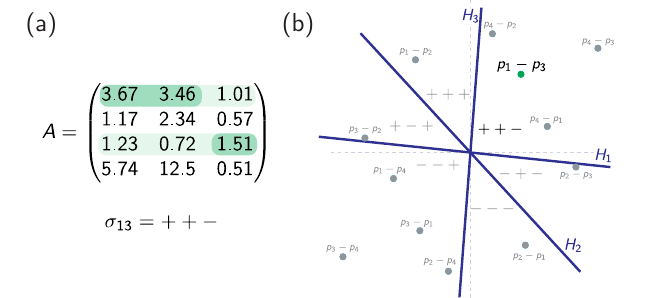}
\caption{\label{fig:hyperplanes_diff}
The difference vectors $\Sigma_{\diff}(A)$. 
(a) We have $\sigma_{13} = ++-$, since $a_{11} - a_{31} > 0, a_{12} - a_{32} >0, a_{13} - a_{33} < 0$. (b) The point $p_1 - p_3$ lies in the chamber $++-$,  on the positive sides of $H_1$ and $H_2$ and the negative side of $H_3$. 
}
\end{figure}

The set of difference topes $\Sigma_{\diff}(A)$ is sufficient to recover $\Pi(A)$. To see this, note that the $j^{th}$ entry of $\sigma_{ik}$ records whether $i$ or $k$ comes first in the $j^{th}$ column permutation. 
Again if the set of normal vectors is large enough, $\Sigma_{\diff}(A)$ is sufficient to recover the monotone rank of $A$ via the VC dimension of $\cT(\cH)$. Our goal is to recover the dimension of $\cH$ from the partial information provided by $\Sigma_{\diff}(A)$.

We can use the VC dimension to give lower bounds for monotone rank based $\Sigma_{\diff}(A)$. 

\begin{prop}\label{prop:vc_rank_diff}
For any matrix $A$, 

\[ \dim_{\VC}(\Sigma_{\diff}(A)) \leq \monr(A).\]
\end{prop}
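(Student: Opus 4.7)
The plan is to mirror the proof of Proposition \ref{prop:vc_rank_thresh} almost verbatim, swapping in the hyperplane-arrangement side of the picture. First I would assume $\monr(A) = d$ and pick a rank $d$ representation $(\cP, \cH, \cF)$ guaranteed by the definition of monotone rank. The key input is Lemma \ref{lem:diff}, which gives the containment $\Sigma_{\diff}(A) \subseteq \cT(\cH)$.

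Next I would use the elementary monotonicity of VC dimension under containment: if $\Sigma \subseteq \Sigma'$ then $\dim_{\VC}(\Sigma) \leq \dim_{\VC}(\Sigma')$, since any subset shattered by $\Sigma$ is shattered by $\Sigma'$. Applying this to the containment above gives
\[ \dim_{\VC}(\Sigma_{\diff}(A)) \leq \dim_{\VC}(\cT(\cH)). \]
Finally, the normal vectors of $\cH$ live in $\R^d$, so they span a subspace of dimension $d' \leq d$. By Lemma \ref{lem:vc_plane} applied in the ambient span, $\dim_{\VC}(\cT(\cH)) = d' \leq d$. Chaining the two inequalities gives $\dim_{\VC}(\Sigma_{\diff}(A)) \leq d = \monr(A)$, as desired.

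There is no real obstacle here: the only place one might stumble is the off-by-one discrepancy with Proposition \ref{prop:vc_rank_thresh}, where one subtracts 1 because $\cT(\cP)$ has VC dimension $d+1$ (affine rather than linear). In the difference-topes setting we are working with a \emph{central} hyperplane arrangement whose topes have VC dimension exactly equal to the dimension of the span of the normal vectors, so no shift is needed and the inequality comes out clean as $\dim_{\VC}(\Sigma_{\diff}(A)) \leq \monr(A)$. This is precisely the difference in statement between Proposition \ref{prop:vc_rank_thresh} and the proposition to be proved.
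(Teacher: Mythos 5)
Your proof is correct and follows essentially the same route as the paper's: invoke Lemma \ref{lem:diff} to get $\Sigma_{\diff}(A) \subseteq \cT(\cH)$, use monotonicity of VC dimension under containment, and apply Lemma \ref{lem:vc_plane}. Your extra remark handling the case where the normal vectors span only a proper subspace of $\R^d$ (giving $d' \leq d$) is a minor refinement of the paper's argument, which implicitly assumes the normals span $\R^d$, but the conclusion is the same.
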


\begin{proof}
Suppose $A$ has monotone rank $d$. 
By Lemma \ref{lem:diff}, 
$$\Sigma_{\diff}(A) \subseteq \cT(\cH).$$
Thus, 
$ \dim_{\VC}(\Sigma_{\diff}(A)) \leq \dim_{\VC}(\cT(\cH))$. 
Now, by Lemma \ref{lem:vc_plane}, $\dim_{\VC}(\cT(\cH)) = d$. 
Thus \[\dim_{\VC}(\Sigma_{\diff}(A)) \leq d.\] 
\end{proof}

\begin{defn}
Define the \emph{VC rank} of a matrix $A$  as 
\[\vcrank(A) = \dim_{\VC}(\Sigma_{\diff}(A)) \]
\end{defn}

Combining Propositions \ref{prop:vc_rank_thresh} and \ref{prop:vc_rank_diff}, we obtain a proof of Theorem \ref{ithm:rad_vc} from the introduction.

\begin{ithm}
For any matrix $A$, the Radon rank and the VC rank are both lower bounds for the monotone rank. 
\end{ithm}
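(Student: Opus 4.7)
The plan is essentially to assemble the two propositions already established. Given a matrix $A$ of monotone rank $d$, by definition there exists a rank $d$ representation $(\cP, \cH, \cF)$ with $\cP, \cH \subset \R^d$. I would handle the two inequalities separately, one via threshold topes and one via difference topes, then combine them.

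For the Radon rank bound, I would invoke Proposition \ref{prop:vc_rank_thresh}: since $\Sigma_{\thresh}(A) \subseteq \cT(\cP)$ by Lemma \ref{lem:thresh}, monotonicity of the VC dimension under set inclusion gives $\dim_{\VC}(\Sigma_{\thresh}(A)) \leq \dim_{\VC}(\cT(\cP))$, and Lemma \ref{lem:vc_point} identifies the right-hand side as $d+1$. Subtracting one yields $\radr(A) \leq d = \monr(A)$. Symmetrically, for the VC rank bound, I would invoke Proposition \ref{prop:vc_rank_diff}: the containment $\Sigma_{\diff}(A) \subseteq \cT(\cH)$ from Lemma \ref{lem:diff}, combined with Lemma \ref{lem:vc_plane} giving $\dim_{\VC}(\cT(\cH)) = d$, produces $\vcrank(A) \leq d = \monr(A)$.

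Since both propositions are already proven in the excerpt, the proof of Theorem \ref{ithm:rad_vc} is simply a one-line combination of them. The only real content is acknowledging the definitional chain: rank $d$ representation $\Rightarrow$ sign vectors sit inside the topes of a $d$-dimensional arrangement $\Rightarrow$ VC dimension bounded by the (affine or linear) dimension via Radon's theorem. There is no substantive new obstacle at this assembly step; all of the genuine work — the geometric interpretations of $\Sigma_{\thresh}$ and $\Sigma_{\diff}$, and the Radon-theorem computations of $\dim_{\VC}(\cT(\cP))$ and $\dim_{\VC}(\cT(\cH))$ — has already been carried out in Lemmas \ref{lem:thresh}, \ref{lem:diff}, \ref{lem:vc_point}, and \ref{lem:vc_plane}. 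Thus the proof I would write is essentially a one-sentence appeal to Propositions \ref{prop:vc_rank_thresh} and \ref{prop:vc_rank_diff}.
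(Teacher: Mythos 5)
Your proposal is correct and matches the paper's own argument: the paper likewise obtains Theorem \ref{ithm:rad_vc} by simply combining Propositions \ref{prop:vc_rank_thresh} and \ref{prop:vc_rank_diff} with the definitions of $\radr(A)$ and $\vcrank(A)$, all substantive work having been done in Lemmas \ref{lem:thresh}, \ref{lem:diff}, \ref{lem:vc_point}, and \ref{lem:vc_plane}.
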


The lower bounds provided by $\radr(A)$ and  $\vcrank(A)$ give a partial solution to the monotone rank estimation problem:
in the limit where a large amount of data is available relative to the monotone rank, these bounds are sufficient to recover $\mrank(A)$.
However, the size of a matrix $A$ needed to have either  $\radr(A) = d$ and  $\vcrank(A) = d$ grows exponentially in $d$ \cite{curto22novel}. 
We will show that monotone rank can grow faster than this by applying results about sign rank. 
We will then develop the connection between monotone rank and oriented matroid theory which will allow us to give lower bounds on monotone rank from less data, and to show that computing monotone rank exactly is hard.

\subsection{Sign rank}
The \emph{sign rank} of a matrix, used in theoretical computer science, is the minimal rank consistent with its sign pattern
\cites{alon2014sign, forster2002linear, basri2009visibility}. 

\begin{defn}
The \emph{sign rank} $\rank_{\sign}(A)$  of a matrix $A$ is the smallest rank $d$ such that there exists a rank $d$ matrix $B$ such that $\sign(A_{ij}) = \sign(B_{ij})$.
\end{defn}

We can relate the monotone rank of a matrix to the sign ranks of  $\Sigma_{\thresh}(A)$  and $\Sigma_{\diff}(A)$, each viewed a matrix. 
We will let  $\Sigma_{\thresh}(A)$ refer to the matrix whose columns are elements of  $\Sigma_{\thresh}(A)$ and let  $\Sigma_{\diff}(A)$ refer to the matrix whose rows are elements of $\Sigma_{\diff}(A)$.

\begin{prop}\label{prop:srmr}
For any matrix $A$, 
\[\rank_{sign}(\Sigma_{\thresh}(A)) \leq \monr(A) + 1\]
and
\[\rank_{sign}(\Sigma_{\diff}(A)) \leq \monr(A).\]
\end{prop}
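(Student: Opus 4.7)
The plan is, for each inequality, to exhibit an explicit real-valued matrix that sign-agrees with the tope matrix in question and whose rank is controlled by a rank-$d$ representation of $A$. Fix such a representation $(\cP,\cH,\cF)$, with $A_{ij} = f_j(p_i \cdot h_j)$, $p_i, h_j \in \R^d$, and $d = \monr(A)$.

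First I would handle the difference-tope bound, using the identity already appearing in the proof of Lemma~\ref{lem:diff}: monotonicity of $f_j$ gives $\sign(a_{ij}-a_{kj}) = \sign((p_i - p_k)\cdot h_j)$. Define $N \in \R^{m^2 \times n}$ by $N_{(i,k),j} = (p_i-p_k)\cdot h_j$. Then $\sign(N)$ is exactly the matrix whose rows are the difference topes $\sigma_{ik}$, and $N$ factors as $DH^T$, where $D$ is the $m^2 \times d$ matrix with row $(i,k)$ equal to $p_i - p_k$ and $H$ is the $n \times d$ matrix with rows $h_1, \ldots, h_n$. Hence $\rank(N) \leq d$, which yields $\rank_{\sign}(\Sigma_{\diff}(A)) \leq d = \monr(A)$.

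For the threshold-tope bound I would homogenize, absorbing the scalar threshold into an extra coordinate. By monotonicity of $f_j$, $\sign(a_{ij} - \theta) = \sign(p_i \cdot h_j - f_j^{-1}(\theta))$. Set $\hat p_i = (p_i, 1) \in \R^{d+1}$ and, for each pair $(j,\theta)$ appearing in the definition of $\Sigma_{\thresh}(A)$, set $\hat h_{j,\theta} = (h_j, -f_j^{-1}(\theta)) \in \R^{d+1}$. Then $\sign(a_{ij} - \theta) = \sign(\hat p_i \cdot \hat h_{j,\theta})$. Building the matrix $M$ whose columns are indexed by these pairs and whose entries are the corresponding inner products (flipping the sign of any column whose target tope in the definition carries a minus rather than a plus) gives $\sign(M) = \Sigma_{\thresh}(A)$ as a matrix, and $M = \hat P \hat H^T$ factors through $\R^{d+1}$, so $\rank(M) \leq d+1$, yielding $\rank_{\sign}(\Sigma_{\thresh}(A)) \leq d+1$.

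I do not anticipate any real obstacle. The entire content is that the difference topes are linear functions of the $p_i$ against the $h_j$, while the threshold topes are affine-linear and thus cost one extra rank upon homogenization. The only bookkeeping is the $\pm$ appearing in the definition of $\Sigma_{\thresh}(A)$, which is absorbed by negating columns of $M$ and does not affect its rank.
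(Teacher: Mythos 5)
Your proof is correct and is essentially the paper's own argument: both exhibit an explicit real matrix of rank at most $d$ (resp.\ $d+1$) that sign-agrees with the difference (resp.\ threshold) tope matrix, with your homogenizing coordinate $\hat p_i = (p_i,1)$, $\hat h_{j,\theta} = (h_j, -f_j^{-1}(\theta))$ playing exactly the role of the all-ones vector $\mathbf 1$ adjoined to the column space of $B$ in the paper. The only difference is presentational — you phrase it through the rank factorization $B = PH^T$ rather than through the column/row spans of $B$ — so no further comment is needed.
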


\begin{proof}
Suppose $A$ has monotone rank $d$. 
Then let $B$ be a rank $d$ matrix with $B_{ij} = f_j(A_{ij})$. 
In particular, its column rank is $d$, thus there is a set of $d$ column vectors $b_1, \ldots, b_d$ such that any column $b$ of $B$ can be obtained as a linear combination $b = \lambda_1 b_1 + \ldots + \lambda_d b_d$ of these vectors. 
Now, recall that each column of $\Sigma_{\thresh}(A)$ is of the form  
\[\sigma_{j}(\theta) = (\sign(a_{1j} - \theta), \ldots, \sign (a_{mj} - \theta) ) = (\sign(b_{1j} - f\inv(\theta)), \ldots, \sign (b_{mj} - f\inv(\theta))) .\]
Thus, each column $\sigma_{j}(\theta)$ of $\Sigma_{\thresh}(A)$ has the same sign pattern of a vector of the form 
$b = \lambda_1 b_1 + \ldots + \lambda_d b_d  + \lambda_{d+1} \mathbf 1$, where $\mathbf 1$ is the all ones vector. 
Thus, the sign rank of  $\Sigma_{\thresh}(A)$ is at most $d +1 $. 

Likewise, the row rank of $B$ is $d$, so there is a set of $d$ row vectors such that every  row $b$ of $B$ can be obtained as a linear combination $b = \lambda_1 b_1 + \ldots + \lambda_d b_d$ of these vectors. 
Now, recall that each row of $\Sigma_{\diff}(A)$ is of the form \[\sigma_{ik} = (\sign(a_{i1} -  a_{k1}), \ldots, \sign(a_{in} -  a_{kn})) = 
 (\sign(b_{i1} -  b_{k1}), \ldots, \sign(b_{in} -  b_{kn})).\]
 Thus each row of $\Sigma_{\diff}(A)$ has the same sign pattern as a vector of the form $b = \lambda_1 b_1 + \ldots + \lambda_d b_d$, so the sign rank of  $\Sigma_{\diff}(A)$ is at most $d$. 
\end{proof}

We can use this relationship between monotone rank and sign rank to show that the monotone rank of a matrix can be higher than that provided by the VC rank or Radon rank. 

We will use a result of \cite{forster2002linear} which bounds the sign rank of a sign matrix $M$ in terms of $||M||$. Recall that for any matrix $M$, $||M||$ is the greatest singular value of $M$.

\begin{thm*}[\cite{forster2002linear}]
Let $M\in \{\pm 1\}^{m\times n}$ be a sign matrix of sign rank $d$. Then $$d\geq \frac{mn}{||M||}.$$ 
\end{thm*}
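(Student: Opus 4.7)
The plan is to follow Forster's original argument via isotropic position. Suppose $M\in\{\pm 1\}^{m\times n}$ has sign rank $d$, so there exist unit vectors $u_1,\dots,u_m$ and $v_1,\dots,v_n$ in $\R^d$ with $M_{ij}\langle u_i,v_j\rangle > 0$ for all $i,j$. The first main step---and the technical crux---is a rearrangement lemma: after applying a suitable invertible linear map $T\in GL(d)$ to the $u_i$'s (and the compensating map to the $v_j$'s, followed by rescaling back to unit norm), one can arrange that $\sum_{i=1}^m u_i u_i^{\top} = (m/d)\,I_d$, while preserving every sign relation $M_{ij}\langle u_i,v_j\rangle > 0$. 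Existence of such a $T$ comes from a compactness / variational argument (minimizing a suitable concave function on positive-definite matrices), or equivalently from Barthe's dual John-type theorem in convex geometry.

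Once the representation is in isotropic position, the rest is a short computation. First, $\sum_{i,j}\langle u_i,v_j\rangle^2 = \sum_j v_j^{\top}\bigl(\sum_i u_i u_i^{\top}\bigr) v_j = (m/d)\sum_j\|v_j\|^2 = mn/d$. Let $N$ be the $m\times n$ matrix with entries $N_{ij}=\langle u_i,v_j\rangle$; since $\rank N\le d$, its nuclear norm obeys $\|N\|_* \le \sqrt{d}\,\|N\|_F = \sqrt{mn}$. Now evaluate $\langle M,N\rangle_F = \sum_{i,j} M_{ij}\langle u_i,v_j\rangle$ in two ways: by trace duality $\langle M,N\rangle_F \le \|M\|\cdot\|N\|_* \le \|M\|\sqrt{mn}$; on the other hand, the sign agreement gives $M_{ij}\langle u_i,v_j\rangle = |\langle u_i,v_j\rangle|\ge \langle u_i,v_j\rangle^2$ (since each $|\langle u_i,v_j\rangle|\le 1$), hence $\langle M,N\rangle_F \ge mn/d$.

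Combining the two estimates yields $mn/d \le \|M\|\sqrt{mn}$, i.e., $d\ge \sqrt{mn}/\|M\|$, which is the classical Forster inequality and the output of the argument above. The main obstacle throughout is the isotropic-position rearrangement in the first paragraph; without that normalization one can still compute $\|N\|_F$, but the resulting bound is orders of magnitude weaker. The bound as displayed in the statement, $d\ge mn/\|M\|$, appears to omit a square root (for a Hadamard matrix it would force $n\ge n^{3/2}$); the square-rooted form $d\ge\sqrt{mn}/\|M\|$ is the bound that the strategy genuinely produces, and it already suffices for the intended application to Theorem~\ref{ithm:sqrt}.
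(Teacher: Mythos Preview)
The paper does not prove this theorem at all; it merely quotes it from \cite{forster2002linear} and then applies it to Hadamard matrices. So there is no ``paper's own proof'' to compare against.

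Your argument is a faithful rendition of Forster's original proof: the isotropic-position lemma (putting the $u_i$ in John/Barthe position so that $\sum_i u_iu_i^{\top}=(m/d)I_d$) is exactly the crux in \cite{forster2002linear}, and the remaining inequalities---bounding $\langle M,N\rangle_F$ above by $\|M\|\cdot\|N\|_*\le \|M\|\sqrt{mn}$ and below by $\sum_{i,j}|\langle u_i,v_j\rangle|\ge \sum_{i,j}\langle u_i,v_j\rangle^2 = mn/d$---are the standard computation that follows. Each step checks out.

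You are also correct that the displayed bound in the paper is misstated: Forster's inequality is $d\ge \sqrt{mn}/\|M\|$, not $d\ge mn/\|M\|$. The paper's own application to the $N\times N$ Hadamard matrix confirms this: it computes $\|H_n\|=\sqrt{N}$ and concludes $\rank_{\sign}(H_n)\ge N/\sqrt{N}=\sqrt{N}$, which is exactly $\sqrt{mn}/\|M\|$ (with $m=n=N$), not $mn/\|M\|=N^{3/2}$. So the statement as printed has a typo, and your corrected form is the one actually needed and used downstream.
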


Using this together with Proposition \ref{prop:srmr}, we can bound the monotone rank of $A_\Sigma$ in terms of the singular values of the sign matrix $\Sigma$. 

The cited theorem, and related results, make it possible to construct explicit examples of matrices with high sign rank relative to their size. 
The primary example given in \cite{forster2002linear} is the family of Hadamard matrices $H_n$. These are a family of symmetric matrices with entries $\pm 1$ whose rows are pairwise orthogonal. They are defined recursively, with $$H_0= (1),$$ and $$H_{n+1} = \begin{pmatrix}
H_n & H_n \\ H_n & -H_{n}
\end{pmatrix}.$$ 
Notice that $H_{n}$ is $2^n\times 2^n$. Let $N:=2^n$, so that $H_n$ is $N\times N$.  
Now, we find $||H_n||$.
Now, because the columns of $H_n$ are pairwise orthogonal and the entries are $\pm 1$, we note that 
$$H_n^T H_n = NI.$$
This has eigenvalues $N, \ldots, N$. Thus, $||H_n|| = \sqrt{N}$. Thus, we have 
$$\sign\rank (H_n )\geq \frac{N}{\sqrt N} = \sqrt{N}.$$

We can  use the Hadamard matrices to construct examples of matrices with high monotone rank.

\begin{ithm}\label{thm:hadamard}
For each $N = 2^n$,  there exists a  $N\times N$ order matrix $A_N$ with $$\monr(A_{N}) \geq  \sqrt{N}-1.$$
\end{ithm}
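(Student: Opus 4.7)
The plan is to combine Proposition \ref{prop:srmr}, which gives $\rank_{\sign}(\Sigma_{\diff}(A)) \leq \mrank(A)$, with the sign rank bound $\rank_{\sign}(H_n) \geq \sqrt{N}$ just derived from Forster's theorem. The only real task is to exhibit a single $N \times N$ matrix $A_N$ whose difference topes contain (as a row-submatrix) a matrix closely related to $H_n$, so that the high sign rank of $H_n$ forces $\mrank(A_N)$ to be large.

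The construction I have in mind is to let $A_N$ be the $N \times N$ matrix whose first $N-1$ rows are the first $N-1$ rows of $H_n$ (viewed as real $\pm 1$ entries) and whose last row is the zero vector. Writing $h_{ij}$ for the entries of $H_n$, the sign vector $\sigma_{iN}$ has $j$-th entry $\sign(a_{ij}-a_{Nj}) = \sign(h_{ij}) = h_{ij}$ for every $i < N$, since the reference row is zero and all other entries are $\pm 1$. Thus, letting $H'$ denote the $(N-1) \times N$ submatrix of $H_n$ obtained by deleting the last row, the rows of $H'$ appear among the rows of the difference-tope matrix $\Sigma_{\diff}(A_N)$.

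I then bound $\rank_{\sign}(H')$ directly via Forster's theorem. Because the rows of $H_n$ are pairwise orthogonal of norm $\sqrt{N}$, the rows of $H'$ are as well, so $H'(H')^{T} = N\cdot I_{N-1}$ and $\|H'\| = \sqrt{N}$. Forster's bound then gives
\[
\rank_{\sign}(H') \;\geq\; \frac{\sqrt{(N-1)N}}{\|H'\|} \;=\; \sqrt{N-1}.
\]
Since sign rank is monotone under taking submatrices, $\rank_{\sign}(\Sigma_{\diff}(A_N)) \geq \rank_{\sign}(H') \geq \sqrt{N-1}$, and by Proposition \ref{prop:srmr},
\[
\mrank(A_N) \;\geq\; \rank_{\sign}(\Sigma_{\diff}(A_N)) \;\geq\; \sqrt{N-1} \;\geq\; \sqrt{N}-1,
\]
where the last inequality is the elementary fact that $\sqrt{N-1}+1 \geq \sqrt{N}$.

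There is no serious obstacle; the whole argument is a translation of the known sign-rank bound into the language of monotone rank. The minor subtlety is that one row of $A_N$ must be "spent" as a reference to turn sign patterns of entries into sign patterns of differences, which is why we apply Forster to the $(N-1)\times N$ matrix $H'$ instead of $H_n$ and end up with $\sqrt{N}-1$ rather than $\sqrt{N}$. It is worth noting that one might try to save this factor by using the $\Sigma_{\thresh}$ bound in Proposition \ref{prop:srmr} instead, but the extra $+1$ there cancels the gain, so the $\Sigma_{\diff}$ route is the cleanest.
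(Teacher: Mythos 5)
Your proof is correct, but it takes a genuinely different route from the paper's. The paper first proves a general encoding lemma: any set of sign vectors can be realized inside the \emph{threshold} topes of a matrix by turning each sign vector into a column order, and then it chains $\sqrt{N}\leq \rank_{\sign}(H_N)\leq \rank_{\sign}(\Sigma_{\thresh}(A_{H_N}))\leq \monr(A_{H_N})+1$, losing the $1$ through the $+1$ in the threshold half of Proposition \ref{prop:srmr}. You instead use the \emph{difference}-tope half of Proposition \ref{prop:srmr}, building $A_N$ from the first $N-1$ rows of $H_n$ plus a zero reference row so that the rows of the $(N-1)\times N$ submatrix $H'$ appear verbatim among the difference topes, and you pay for the lost row by applying Forster to $H'$ (your computation $\|H'\|=\sqrt{N}$ and the bound $\sqrt{(N-1)N}/\|H'\|=\sqrt{N-1}$ are right, and you correctly use the $\sqrt{mn}/\|M\|$ form of Forster's theorem, which is what the paper actually applies even though its displayed statement drops the square root). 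Your route is arguably more economical and even gives the marginally sharper bound $\sqrt{N-1}\geq\sqrt{N}-1$; moreover it generalizes just as well (any sign matrix plus a zero row embeds its rows into the difference topes), whereas the paper's column-order encoding is the reusable tool it later leans on conceptually. One cosmetic point: your $A_N$ has repeated entries within columns (values in $\{-1,0,1\}$), whereas the paper's construction produces columns with distinct entries; if one wants an ``order matrix'' with no ties, perturb each entry by distinct quantities of absolute value less than $1/2$ --- the signs $\sign(a_{ij}-a_{Nj})=h_{ij}$ are unchanged, so the argument is unaffected.
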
 

\begin{proof}

We first show that we can encode any set $n$ sign vectors of length $m$ as a subset of the threshold topes of a $m\times n$ matrix. 
Let $\sigma$ be a sign vector. 
Define a permutation $\pi(\sigma)$ of $[m]$ by setting $i$  before $j$ if $\sigma_i = -$ and $\sigma_j = +$ or $\sigma_i = \sigma_j$ and $i < j$. 
Now,  define $A_{\Sigma}$ to be a matrix with column orders $\pi(\sigma)$ for each $\sigma \in \Sigma$. 
Notice that \[\Sigma \subseteq \Sigma_{\thresh}(A_\Sigma).\]

Now, we apply this to the Hadamard matrices, setting $A_N  = A_{H_N}$. Then applying Proposition \ref{prop:srmr} and the result of \cite{forster2002linear}, we have 
\[\sqrt{N} \leq \rank_{\sign}(H_N) \leq \rank_{\sign}(\Sigma_{\thresh}(A_{H_N})) \leq \monr(A_{H_n}) + 1\]
thus 
\[\sqrt{N} -1 \leq \monr(A_{H_n})\]
as desired. 
\end{proof}

Note that this result appears as a bound for the underyling rank, rather than the monotone rank, in \cite{curto22novel}. 

This means that for large $N$, the matrices $A_{N}$ have monotone rank exceeding their Radon rank. 
Explicitly, we can check that $\pm H_n$, viewed as a set of sign vectors, has VC dimension $n$. 
Thus, for $n \geq 5$, $\sqrt{2^n} > n$, thus sign rank of $H_n$ exceeds the bound from VC dimension. 

In general, sign rank is difficult to compute, as we will discuss in Section \ref{sec:complexity}. 

\section{Oriented matroids and monotone rank}
Through the connection to sign rank, we have shown that monotone rank can exceed the bounds provided by VC rank and Radon rank. However, this does not provide us with small, interpretable examples of matrices for which monotone rank exceeds Radon rank.
In particular, the first case of a Hadamard matrix $H_n$ for which sign rank bound $\sqrt{2^n}$ exceeds the VC dimension $n$ is $H_5$, which is a $32 \times 32$ matrix. 
In this section, we use \emph{oriented matroid theory} to construct smaller examples of matrices whose sign rank or whose monotone rank exceeds the bounds which come from the VC dimension. 
We begin with an example, before introducing the general theory in Subsection \ref{sec:matroids} and rephrasing the monotone rank problem as an example of \emph{oriented matroid completion} in Subsection \ref{sec:matroid_completion}.

\begin{ex}\label{ex:rad_strict}
The first inequality in Proposition \ref{prop:vc_rank_thresh} may be strict. In particular, the order matrix $$ A = \begin{pmatrix}
12 & 13 & 3& 10 & 6 \\
13 & 14 & 4 & 9 & 5 \\
3 & 4 & 15 & 11 & 1 \\
10 & 9 & 11 & 8 & 2 \\
6 & 5 & 1 & 2 & 7 \\
\end{pmatrix}
$$
has Radon rank two and monotone rank three. 
\end{ex}

\begin{proof}

To see this, we first compute $\radr(A)$.  Since the largest sets of points we can shatter have size $3$, $\radr(A) = 2$. Now, we show that the monotone rank is, in fact, 3. 

We suppose for the sake of contradiction that $\monr (A) = 2$. Let $(\cP, \cH, \cF)$ be a rank 2 realization of $A$. 
By Radon's theorem, every subset of $\{p_1,p_2, p_3, p_4,p_5\}$ of size 4 must have a Radon partition. 
This means that each way we restrict the topes $\cT(\cP)$ to a set $\rho \subseteq [5]$ of size 4, there must be at least one ``potential Radon partition", i.e. an element of  $\{+, -\}^{\rho} \setminus (\cT(\cP)|_{\rho})$.  
In particular, since $\Sigma_{\thresh}(A) \subseteq \cT(\cP)$, we say that a potential Radon partition of $\rho$ is an element of $$\{+, -\}^{\rho} \setminus (\Sigma_{\thresh}(A)|_{\rho}).$$

For each subset of size 4, we compute the set of potential Radon partitions, writing these sign vectors as signed sets. We find that there is exactly one potential Radon partition for each subset:
\begin{align*}
1234: (14, 23)\\
1235: (1, 235)\\
1425: (14, 25)\\
1345: (135, 4)\\
2345: (235, 4)
\end{align*} 
Now, suppose $V$ is an arrangement of points with these Radon partitions, as illustrated in Figure \ref{fig:non_radon}. 
Then the partition $(1, 235)$ implies that $p_1 \in \conv(p_2, p_3, p_5)$. 
The partition $(14, 25)$ implies that the line segment from $p_1$ to $p_4$ crosses out of the triangle $\conv(p_2, p_3, p_5)$ by crossing the line segment from $p_2$ to $p_5$. However, the partition $(14, 23)$ implies that the line segment from $p_1$ to $p_4$ crosses out of the triangle $\conv(p_2, p_3, p_5)$ by crossing the line segment from $p_2$ to $p_3$.  
Thus, we have reached a contradiction. 
\end{proof}
\
\begin{figure}[ht!]
\includegraphics[width = 2 in]{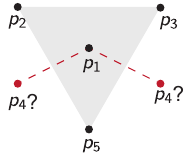}
\caption{\label{fig:non_radon}
There is no way to place the points $p_1, p_2, p_3,$ and $p_4$ consistent with the Radon partitions allowed by $A$.  
}
\end{figure}

\subsection{Oriented matroids}
\label{sec:matroids}
In this subsection, we expand upon this example using oriented matroid theory, which abstracts and generalizes the properties of both point arrangements and hyperplane arrangements. 
This will allow us to construct more examples.\
We begin by defining the oriented matroids of point and hyperplane arrangements. A more thorough introduction to oriented matroids can be found in \cite{bjorner1999oriented}. 

\begin{defn}
Let $\cP\subset \R^d$ be a point arrangement. Let $H$ be an affine hyperplane in $\R^d$. The \emph{covectors} of $\cP$, written $\cV^*(\cP)$ are the sign vectors which arise as 
\begin{align*}
(\sigma_H)_i = \begin{cases} 
+ & p_i \in H^+\\
- & p_i \in H^-\\
0 & p_i \in H
\end{cases}
\end{align*}

Let $\cH\subset \R^d$ be a central  hyperplane arrangement. Let $v$ be a point $\R^d$. The \emph{covectors} of $\cH$, written $\cV^*(\cH)$ are the sign vectors which arise as 
\begin{align*}
(\sigma_v)_i = \begin{cases} 
+ & v \in H_i^+\\
- & v \in H_i^-\\
0 & v \in H_i
\end{cases}
\end{align*}
\end{defn}

We define a partial order on sign vectors by inclusion, with $X \leq Y  \iff X^+ \subseteq Y ^+, X^- \subseteq Y^-$. 
Under this ordering, notice  that the topes $\cT(\cP)$ and $\cT(\cH)$ are the maximal elements of $\cV^*(\cP)$ and $\cV^*(\cH)$. 

The sets $\cV^*(\cH)$ and $\cV^*(\cP)$ follow the \emph{covector axioms for oriented matroids}.
In order to state them, we introduce some more notation. 
The \emph{support} of a sign vector $X$ is the set
$\underline X := \{ i \mid X_i \neq 0\}$. 
The \emph{positive part} of $X$ is $X^+ := \{i \mid X_i = +\}$ and the
\emph{negative part} is $X^- := \{i \mid X_i = -\}$. 
The \emph{composition} of sign vectors $X$ and $Y$ is defined component-wise by
\begin{align*}
  (X\circ Y)_i := 
  \begin{cases}
    X_i \mbox{ if } X_i\neq 0\\
    Y_i \mbox{ otherwise}.
  \end{cases}
\end{align*}
The \emph{separator} of $X$ and $Y$ is the unsigned set $\sep(X, Y) := \{ i\mid X_i = -Y_i\neq 0\}$. 
\begin{defn}
  Let $E$ be a finite set, and $\cV^* \subseteq 2^{\pm E}$ a collection of sign vectors satisfying the following
  \emph{covector axioms}:
\begin{enumerate}[(V1)]
\item\label{axiom:emptysetV} $\emptyset \in \mathcal V^*$
\item\label{axiom:symmetryV} $X\in \mathcal V^* $ implies $-X\in \mathcal V$. 
\item\label{axiom:compositionV} $X, Y\in \mathcal V^*$ implies $X\circ Y\in \mathcal V^*$. 
\item\label{axiom:crossingV} If $X, Y\in \mathcal V^*$ and $e \in \sep(X,Y)$, then there exists $Z\in \mathcal V^*$ such that $Z_e = 0$ and $Z_f = (X\circ Y)_f = (Y\circ X)_f$ for all $f\notin \sep(X, Y)$. 
\end{enumerate}
Then, the pair $\cM = (E, \cV^*)$ is called an \emph{oriented matroid}, and $\cV^*$ its set of covectors.
\end{defn}

While every point or hyperplane arrangement gives rise to an oriented matroid, not all oriented matroids arise this way. 
An oriented matroid is \emph{representable} if and only if it arises from a hyperplane arrangement (or, equivalently, if and only if it arises from a point arrangement). 
However, oriented matroids capture a good deal of information about an arrangement. 
In particular, we can define the \emph{rank} of an oriented matroid as the VC dimension of its set of topes. 
Under this definition, a point arrangement in $\R^{d+1}$ or a central hyperplane arrangement in $\R^{d}$ give rise to an oriented matroid of rank $d$.

There are a number of equivalent axiom systems for oriented matroids. 
We mention one dual description, the \emph{circuit axioms}, which we will use to shed light on Example \ref{ex:rad_strict}. We first describe the circuits and vectors in the representable case, which correspond to minimal Radon partitions.

\begin{defn}\label{def:circuits_rep}
Let $\cP\subset \R^d$ be an arrangement of $m$ points. The \emph{vectors} $\cV(\cP)$ of $\cP$ correspond to the Radon partitions of the points: a sign vector $X \in 2^{\pm{\cP}}$ is a vector if 
\[\conv(X^+) \cap \conv( X^-) \neq \emptyset.\]

The \emph{circuits} of $\cP$, written $\cC(\cP)$ are minimal vectors, corresponding to the minimal Radon partitions. 
\end{defn}

The vectors of an oriented matroid $\cM$ are the covectors of a dual oriented matroid $\cM^*$, and thus follow the covector axioms.
 The circuits of a point arrangement follow a list of rules know as the circuit axioms for oriented matroids. 
 Oriented matroids can also be defined via these axioms. 

\begin{defn}\label{D:circuitaxioms}
  Let $E$ be a finite set, and $\cC\subseteq 2^{\pm E}$ a collection of signed subsets satisfying the following \emph{circuit axioms}: 
\begin{enumerate}[(C1)]
	\item\label{axiom:emptysetC} $\varnothing \notin \cC$.
	\item\label{axiom:symmetryC} $X \in \cC$ implies $-X \in \cC$.
	\item\label{axiom:incomparableC} $X,Y \in \cC$ and $\underline X \subseteq \underline Y$ implies $X = Y$ or $X = -Y$.
	\item\label{axiom:weakelimC} For all $X,Y \in \cC$ with $X \neq -Y$ and an element $e \in X^+ \cap Y^-$, there is a $Z \in \cC$ such that $Z^+ \subseteq (X^+ \cup Y^+) \setminus e$ and $Z^- \subseteq (X^- \cup Y^-) \setminus e$.
\end{enumerate}
Then the pair $\cM = (E, \cC)$ is an oriented matroid, and $\cC$ is its set of circuits. 
\end{defn}

By Radon's theorem, if $\cP$ is a point arrangement in $\R^{d}$, any set of size $ m \geq d+2$ contains a Radon partition. 
This allows us to read the dimensionality of a point configuration off of its circuits. 
This is used to define the rank of an oriented matroid from of its set of circuits. 
\begin{defn}
An oriented matroid $\cM = (E, \cC)$ has \emph{rank} $d$ if and only if for each set $X \subseteq E$ of size at least $d + 1$, there exists a circuit $Y$ such that $\supp{Y} \subseteq \supp{X}$. 
\end{defn}

Generic point and hyperplane arrangements correspond to \emph{uniform} oriented matroids. 

\begin{defn}
An oriented matroid of rank $d$ on ground set $E$  is \emph{uniform} if each set $\sigma\subseteq E$ of size $d+1$ is the support of a circuit. 
\end{defn}

We observe a relationship between the covectors and vectors  of a point arrangement. 
Namely, if $X\in \cV(\cP)$ is a vector of $\cP$, then the points $X^+$, $X^-$ have intersecting convex hulls and thus cannot be separated by a hyperplane. 
Thus, there is no covector $Y\in \cV^*(\cP)$  containing $X$. 
We can extend this with \emph{orthogonality}. 
Two signed sets are orthogonal if they are neither equal nor opposite on their common support: $X$ and $Y$ are orthogonal, written $X\perp Y$, if either $\supp{X} \cap \supp{Y} = \emptyset$ or if there exist $e, f$ such that $X_e = Y_e$ and $X_f = - Y_f$.

Every covector of an oriented matroid is orthogonal to each vector, and vice versa. 
This is sufficient to characterize either the covectors or the vectors of an oriented matroid, given the other:

\begin{itemize}
\item $X$ is a vector of $\cM$ if and only if $X \perp Y$ for each covector $Y$ of $\cM$
\item $X$ is a covector of $\cM$ if and only if $X \perp Y$ for each vector $Y$ of $\cM$.
\end{itemize}

This will allow us to recover information about the circuits of a point arrangement or hyperplane arrangement using the information about topes we have from $\Sigma_{\thresh}(A)$ and $\Sigma_{\diff}(A)$. 
We can interpret Example \ref{ex:rad_strict} in terms of the circuit axioms for oriented matroids. 

\subsection{Oriented matroid completion}
\label{sec:matroid_completion}

In this section, we interpret the problem of computing monotone rank as an \emph{oriented matroid completion problem}. 
We reexamine Example \ref{ex:rad_strict} in light of this perspective.

\begin{prop}\label{prop:topes_completion}
If $A$ is a matrix of monotone rank $d$, then there exists a representable oriented matroid $\cM_{\thresh}$ of rank $d+1$ such that
\[\Sigma_{\thresh}(A) \subseteq \cT( \cM_{\thresh})\]
and a representable oriented matroid  $\cM_{\diff}$ of rank $d$ such that
\[\Sigma_{\diff}(A) \subseteq \cT( \cM_{\diff}).\]
\end{prop}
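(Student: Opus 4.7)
The plan is to extract both oriented matroids directly from a rank $d$ representation of $A$ and then verify the required tope containments using Lemmas \ref{lem:thresh} and \ref{lem:diff}, reading off the ranks from Lemmas \ref{lem:vc_point} and \ref{lem:vc_plane}.

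First I would fix a rank $d$ representation $(\cP, \cH, \cF)$ of $A$, which exists by the definition of monotone rank combined with the rank factorization observation. Here $\cP = \{p_1,\ldots,p_m\} \subset \R^d$ is the point arrangement and $\cH = \{h_1,\ldots,h_n\} \subset \R^d$ is interpreted as the normal vectors of a central hyperplane arrangement. Every point arrangement in $\R^d$ has a naturally associated representable oriented matroid (whose topes and covectors were defined in Section \ref{sec:matroids}), as does every central hyperplane arrangement in $\R^d$.

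For the first statement, I would set $\cM_{\thresh}$ to be the representable oriented matroid of $\cP$. Lemma \ref{lem:thresh} gives $\Sigma_{\thresh}(A) \subseteq \cT(\cP) = \cT(\cM_{\thresh})$ directly. For the rank, by Lemma \ref{lem:vc_point}, the VC dimension of $\cT(\cP)$ is $d'+1$, where $d'$ is the dimension of the affine span of $\cP$; since $\cP \subset \R^d$ we have $d' \le d$, so $\cM_{\thresh}$ has rank at most $d+1$. If $\cP$ does not already have full affine span, I would perturb the representation within $\R^d$ (or append a generic auxiliary point to $\cP$, which only enlarges the set of topes) to match the stated rank $d+1$ exactly; this does not affect the containment of $\Sigma_{\thresh}(A)$.

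For the second statement, I would set $\cM_{\diff}$ to be the representable oriented matroid of the central hyperplane arrangement with normal vectors $\cH$. Lemma \ref{lem:diff} yields $\Sigma_{\diff}(A) \subseteq \cT(\cH) = \cT(\cM_{\diff})$. By Lemma \ref{lem:vc_plane}, the rank equals $\dim_{\VC}(\cT(\cH))$, which is the dimension of the linear span of $\cH$ and thus at most $d$; again if the inequality is strict we can enlarge $\cH$ by a generic normal vector to hit exactly $d$. Both matroids are representable by their very construction from explicit arrangements in $\R^d$. The statement is essentially a repackaging of Lemmas \ref{lem:thresh} and \ref{lem:diff} in the language of Section \ref{sec:matroids}, so I do not anticipate any real obstacle; the only mildly subtle point is the affine-versus-linear bookkeeping that produces the ``$+1$'' in the threshold case but not the difference case.
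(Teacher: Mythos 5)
Your proposal is correct and follows essentially the same route as the paper: take a rank $d$ representation $(\cP,\cH,\cF)$, let $\cM_{\thresh}$ and $\cM_{\diff}$ be the oriented matroids of the point arrangement $\cP$ and the central hyperplane arrangement $\cH$, and invoke Lemmas \ref{lem:thresh} and \ref{lem:diff}. You are in fact more careful than the paper about the exact-rank bookkeeping (the paper silently asserts the ranks $d+1$ and $d$), and since only the upper bounds on rank are used downstream, your perturbation remark suffices.
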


\begin{proof}
Suppose $A$ has monotone rank $d$. 
Let $(\cP, \cH, \cF)$ be a rank $d$ representation of $A$. 
By Lemma \ref{lem:thresh}, we have 
\[\Sigma_{\thresh}(A) \subseteq \cT(\cP)\]
and by Lemma \ref{lem:diff}, we have 
\[\Sigma_{\diff}(A) \subseteq \cT( \cH).\]
Letting $\cM_{\thresh}$ be the matroid of the point arrangement $\cP$ and letting $\cM_{\diff}$ be the matroid of  the point arrangement $\cP$, we obtain the desired result. 
\end{proof}

This motivates us to define the \emph{oriented matroid rank} of a set of sign vectors. We use this to define the oriented matroid completion rank of a matrix as a combinatorial relaxation of the monotone rank. 
To define this, we drop the condition that the oriented matroid be representable. 


\begin{defn}
The \emph{oriented matroid rank} of a set of sign vectors $\Sigma$, written $\orank(\Sigma)$ is the minimum rank of an oriented matroid $\cM$ such that $\Sigma \subseteq \cT(\cM)$.
\end{defn}

We use this to define the oriented matroid completion rank of a matrix. 

\begin{defn}
The oriented matroid completion rank of a matrix $A$ is
\[ \orank(A) := \max(\orank(\Sigma_{\diff}(A)), \orank(\Sigma_{\thresh}(A))-1)\]
\end{defn}

With this definition, we obtain Theorem \ref{ithm:omrank} from the introduction as a corollary of Proposition \ref{prop:topes_completion}. 
\begin{ithm}
For any matrix $A$, the oriented matroid completion rank is a lower bound for the monotone rank. 
\end{ithm}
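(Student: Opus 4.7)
The plan is to derive this theorem directly as a corollary of Proposition \ref{prop:topes_completion}, which already does nearly all of the work. The only remaining task is to unpack the definition of $\orank(A)$ and observe that the existence of a \emph{representable} oriented matroid of a given rank containing the relevant sign vectors as topes immediately yields an upper bound on the (unrestricted) oriented matroid rank of those sign vectors.

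First, I would suppose $\monr(A) = d$ and apply Proposition \ref{prop:topes_completion} to obtain a representable oriented matroid $\cM_{\thresh}$ of rank $d+1$ with $\Sigma_{\thresh}(A) \subseteq \cT(\cM_{\thresh})$ and a representable oriented matroid $\cM_{\diff}$ of rank $d$ with $\Sigma_{\diff}(A) \subseteq \cT(\cM_{\diff})$. Since $\orank(\Sigma)$ is defined as the minimum rank over \emph{all} oriented matroids whose topes contain $\Sigma$, and every representable oriented matroid is an oriented matroid, these witnesses give the bounds
\begin{align*}
\orank(\Sigma_{\thresh}(A)) &\leq d + 1, \\
\orank(\Sigma_{\diff}(A)) &\leq d.
\end{align*}

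Next, I would substitute these into the definition
\[\orank(A) = \max\bigl(\orank(\Sigma_{\diff}(A)),\; \orank(\Sigma_{\thresh}(A)) - 1\bigr) \leq \max(d, d) = d = \monr(A),\]
which is the desired inequality.

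There is essentially no obstacle here; the statement is presented in the text as a corollary. The only minor subtlety worth mentioning explicitly in the written-out proof is that taking $-1$ in the threshold term of the definition of $\orank(A)$ is precisely what makes the two bounds from Proposition \ref{prop:topes_completion} match up, reflecting the difference in VC dimension between a point arrangement in $\R^d$ (which gives an oriented matroid of rank $d+1$) and a central hyperplane arrangement in $\R^d$ (which gives an oriented matroid of rank $d$), as already established in Lemmas \ref{lem:vc_point} and \ref{lem:vc_plane}.
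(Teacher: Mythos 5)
Your proposal is correct and follows essentially the same route as the paper: both deduce the result directly from Proposition \ref{prop:topes_completion} by noting that the representable matroids it supplies witness $\orank(\Sigma_{\thresh}(A)) \leq d+1$ and $\orank(\Sigma_{\diff}(A)) \leq d$, hence $\orank(A) \leq d = \monr(A)$. Your version simply spells out the substitution into the definition of $\orank(A)$ a bit more explicitly than the paper does.
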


\begin{proof}
Suppose $A$ is a matrix of monotone rank $d$. 
Then by Proposition \ref{prop:topes_completion}, there exist oriented matroids $\cM_{\thresh}$ and $\cM_{\diff}$  of rank $d+1$ and $d$, respectively such that \(\Sigma_{\thresh}(A) \subseteq \cT( \cM_{\thresh})\)
and
\(\Sigma_{\diff}(A) \subseteq \cT( \cM_{\diff}).\)
Thus $\orank(A) \leq d$.
\end{proof}

This gives us an interpretation of monotone rank estimation as an oriented matroid completion problem: given a set of sign vectors arising as either $\Sigma_{\thresh}(A)$ or $\Sigma_{\diff}(A)$, how can we find an oriented matroid of minimal rank whose topes contain these vectors? 
In essence, this acts as a combinatorial relaxation of the monotone rank estimation problem since the provided matroid may not be representable.
 
\begin{problem}\label{prob:oriented_matroid_completion}
Find the oriented matroid completion rank of a set of sign vectors.
\end{problem}

We can re-interpret Example \ref{ex:rad_strict} by considering a dual version of Proposition \ref{prop:topes_completion}.
To state this version, we first define the orthogonal complement of a set of sign vectors.

\begin{defn}
Let $\Sigma \subseteq 2^{\pm[m]}$ be a set of sign vectors. Define 
\[\Sigma^\perp = \{ Y\in  2^{\pm[m]} \mid Y \perp X \mbox{ for all } X\in \Sigma\}\]
\end{defn}

\begin{prop} \label{prop:vector_completion}
If $A$ is a matrix of monotone rank $d$, then there exists a representable oriented matroid $\cM_{\thresh}$ of rank $d+1$ such that
\[\Sigma_{\thresh}(A)^\perp \supseteq \cV( \cM_{\thresh})\]
and a representable oriented matroid  $\cM_{\diff}$ of rank $d$ such that
\[\Sigma_{\diff}(A)^\perp \supseteq \cV( \cM_{\diff}).\]
\end{prop}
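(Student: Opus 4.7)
The plan is to derive this statement as a direct consequence of Proposition \ref{prop:topes_completion} together with the orthogonality property between vectors and covectors of an oriented matroid. Indeed, the two propositions are essentially dual: Proposition \ref{prop:topes_completion} says the given sign vectors are contained among the topes of some representable matroid, and we want to translate this into a containment of vectors inside the orthogonal complement.

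First, I would invoke Proposition \ref{prop:topes_completion} to obtain, for a matrix $A$ of monotone rank $d$, representable oriented matroids $\cM_{\thresh}$ of rank $d+1$ and $\cM_{\diff}$ of rank $d$ with
\[\Sigma_{\thresh}(A) \subseteq \cT(\cM_{\thresh}) \quad \text{and} \quad \Sigma_{\diff}(A) \subseteq \cT(\cM_{\diff}).\]
These are exactly the matroids built from the point arrangement $\cP$ and the hyperplane arrangement $\cH$ in a rank $d$ representation of $A$, so they are automatically representable and the rank bounds follow.

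Next I would use the orthogonality characterization recalled in the previous subsection: every vector of an oriented matroid is orthogonal to every covector, and in particular to every tope. Fix any $Y \in \cV(\cM_{\thresh})$. For any $X \in \Sigma_{\thresh}(A) \subseteq \cT(\cM_{\thresh})$, $X$ is a covector of $\cM_{\thresh}$, so $Y \perp X$. Since this holds for all $X \in \Sigma_{\thresh}(A)$, we conclude $Y \in \Sigma_{\thresh}(A)^{\perp}$, giving $\cV(\cM_{\thresh}) \subseteq \Sigma_{\thresh}(A)^{\perp}$. The identical argument applied to $\cM_{\diff}$ yields $\cV(\cM_{\diff}) \subseteq \Sigma_{\diff}(A)^{\perp}$.

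There is no real obstacle here; the content is entirely in Proposition \ref{prop:topes_completion} and in the vector/covector orthogonality relation, both of which have been established. The only mild subtlety is keeping straight that topes are a subset of the covectors (they are the maximal ones under the composition order), so the orthogonality statement applies to them without modification.
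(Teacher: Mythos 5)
Your proposal is correct and follows essentially the same route as the paper's own proof: invoke Proposition \ref{prop:topes_completion} to obtain the representable matroids $\cM_{\thresh}$ and $\cM_{\diff}$ containing $\Sigma_{\thresh}(A)$ and $\Sigma_{\diff}(A)$ among their topes, then use vector--covector orthogonality (topes being covectors) to conclude that every vector lies in the corresponding orthogonal complement. No gaps; only the roles of the letters $X$ and $Y$ differ from the paper's write-up.
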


\begin{proof}
By Proposition \ref{prop:topes_completion}, there exist representable oriented matroids $\cM_{\thresh}$ and $\cM_{\diff}$ such that \(\Sigma_{\thresh}(A) \subseteq \cT( \cM_{\thresh})\) and \(\Sigma_{\diff}(A) \subseteq \cT( \cM_{\diff}).\) 
We first consider $\cM_{\thresh}$. 
Now, suppose $X\in \cV(\cM_{\thresh})$. 
Then for any $Y\in \cT(\cM_{\thresh})$, we have $Y\perp X$. Since  \(\Sigma_{\thresh}(A) \subseteq \cT( \cM_{\thresh})\), this implies that $Y\perp X$ for any $Y \in \Sigma_{\thresh}(A) $. 
Thus, $X \in \Sigma_{\thresh}(A)^\perp$. 
By the same argument, if $X\in  \cV( \cM_{\diff})$, then 
$X\in \Sigma_{\diff}(A)^\perp$. 
\end{proof}

\begin{problem}
 \label{prob:dual_oriented_matroid_completion}
Given a set of sign vectors $\Sigma \subseteq 2^{\pm[m]}$, what is the minimal rank of an oriented matroid $\cM$ such that 
\[\Sigma \supseteq \cV(\cM)?\]
\end{problem}

The rank of an oriented matroid is most easily read off of its set of circuits. 
Because of this, we define the \emph{potential circuits} of a set of sign vectors. 

\begin{defn}
Let $\Sigma$ be a set of sign vectors. 
The \emph{potential circuits of rank $d$}, written $(\Sigma^\perp)_d$ are the elements of $X \in \Sigma^\perp$ with $|\underline{X}| =  d+1$. 
\end{defn}

\begin{prop}\label{prop:potential} 

If $\Sigma$ has oriented matroid completion rank $d$, then there exists an oriented matroid $\cM$ such that 
\begin{align*}
\cC(\cM) &\subseteq (\Sigma^\perp)_{d} 
\end{align*}
\end{prop}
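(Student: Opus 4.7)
The plan is to combine a uniform refinement of the completing oriented matroid with the orthogonality between vectors and covectors. By hypothesis there is an oriented matroid $\cM_0$ of rank $d$ with $\Sigma \subseteq \cT(\cM_0)$, and the goal is to produce an oriented matroid $\cM$ whose circuits all have support of size exactly $d+1$ and lie in $\Sigma^\perp$.

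The key first step is to replace $\cM_0$ by a \emph{uniform} oriented matroid $\cM$ of rank $d$ on the same ground set such that $\cT(\cM_0) \subseteq \cT(\cM)$, which automatically preserves the inclusion $\Sigma \subseteq \cT(\cM)$. In the representable case this is geometrically transparent: perturb the underlying point configuration slightly so that no $d+1$ of the points are affinely dependent. All strict separating sign patterns survive, while any tope that had zero entries, because some points lay on a separating hyperplane, gets refined into several topes extending it. In the general, possibly non-realizable, case one appeals to the standard oriented matroid result that every rank $d$ oriented matroid admits a uniform refinement of the same rank on the same ground set, obtainable by a sequence of single-element generic perturbations.

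Once $\cM$ is uniform of rank $d$, every circuit $X \in \cC(\cM)$ automatically satisfies $|\underline{X}| = d+1$. Since $X$ is a vector of $\cM$, the vector-covector orthogonality recalled in Section~\ref{sec:matroids} gives $X \perp Y$ for every covector $Y$ of $\cM$, and in particular for every tope. Since $\Sigma \subseteq \cT(\cM)$, this yields $X \perp Y$ for every $Y \in \Sigma$, i.e.\ $X \in \Sigma^\perp$. Combining these two properties gives $\cC(\cM) \subseteq (\Sigma^\perp)_d$, as required.

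The main obstacle is the first step. In the representable case it is elementary, but for general oriented matroids one must either cite or construct by hand a uniform refinement, and care must be taken to verify that each perturbation step really does preserve all existing topes while eliminating degenerate dependencies. Once the uniformization is in place, the remainder of the argument is a direct consequence of circuit-tope orthogonality together with the definition of uniformity.
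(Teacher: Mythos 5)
Your proposal is correct and follows essentially the same route as the paper's proof: pass to a uniform oriented matroid of the same rank that retains $\Sigma$ among its topes (by perturbation in the realizable case, and by the standard refinement result—cited in the paper as \cite{bjorner1999oriented}, Corollary 7.7.9—in general), then conclude via circuit--tope orthogonality and the fact that circuits of a uniform rank-$d$ matroid have supports of size $d+1$. The ``main obstacle'' you identify is exactly the step the paper handles by that citation, so no genuinely different argument is involved.
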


\begin{proof}
Let $\Sigma$ be a set of sign vectors with oriented matoid rank $d$. Then by definition, there exists an oriented matroid $\cM$ with $\Sigma \subseteq \cT(\cM)$, $\Sigma^\perp \subseteq \cV(\cM)$. 

Without loss of generality, we can take  $\cM$ to be uniform. (In the realizable case, this follows from the fact that we can perturb the hyperplanes realizing an oriented matroid to obtain a realization in general position which still has all of the original topes.  The general case is covered in (\cite{bjorner1999oriented}, Corrollary 7.7.9).)
Thus, each set  $X\subseteq [m]$ of size $d+1$ is the support of a circuit of $\cM$.
These circuits must be orthogonal to every tope of $\cM$, thus
\begin{align*}
\cC(\cM) &\subseteq (\Sigma^\perp)_{d}.
\end{align*}

\end{proof}

\begin{ex}
We consider again the matrix 
$$ A = \begin{pmatrix}
12 & 13 & 3& 10 & 6 \\
13 & 14 & 4 & 9 & 5 \\
3 & 4 & 15 & 11 & 1 \\
10 & 9 & 11 & 8 & 2 \\
6 & 5 & 1 & 2 & 7 \\
\end{pmatrix}.
$$
We have already checked that $A$ has Radon rank two, i.e. $ \dim_{\VC}(\Sigma_{\thresh}(A)) = 3$. Now, we show that $\omrank(\Sigma_{\thresh}(A)) = 4$, thus $\omrank(A) \geq 3$. 
Suppose to the contrary that   $\omrank(\Sigma_{\thresh}(A)) = 3$.
Then there exists a rank three matroid $\cM_{\thresh}$ such that 
$$\cC(\cM_{\thresh}) \subseteq (\Sigma_{\thresh}(A)^\perp)_3.$$

Now, we compute  
\begin{align*}
(\Sigma_{\thresh}(A)^\perp)_3
 = \{&+--+0, -++-0, +--0-, -++0+, +-0+-,\\ &-+0-+, +0+-+, -0-+-, 0++-+, 0--+-\}.
 \end{align*}
 Written out for each support, these are the ``potential Radon partitions" of Example \ref{ex:rad_strict}. We find that each set of size four supports exactly one opposite pair of potential circuits. 

Thus, if there is any oriented matroid $\cM_{\thresh}$ of rank three such that 
$\cC(\cM_{\thresh}) \subseteq (\Sigma_{\thresh}(A)^\perp)_3,$
then this oriented matroid must satisfy 
$$\cC(\cM_{\thresh}) = (\Sigma_{\thresh}(A)^\perp)_3.$$
However, $(\Sigma_{\thresh}(A)^\perp)_3$ does not satisfy the circuit axioms for oriented matroids: it violates C4.
 We apply axiom C4 to $X = +--0-$, $Y = -+0-+$, $e = 5$. Then there must exist $Z \in  \cC$ with 
$Z^+ \subseteq X^+ \cup Y^+ \setminus \{e\} = \{1, 2\}$, 
$Z^- \subseteq X^- \cup Y^- \setminus \{e\} = \{1, 2, 3, 4\}$. 
No such potential circuit is present: the potential circuits on support $\{1, 2, 3, 4\}$ are $+--+0$ and $-++-0$, which do not conform to this pattern. Thus, by Proposition \ref{prop:potential}, 
$$3 \leq \omrank(A) \leq \mrank(A).$$

\end{ex}

\section{Computational complexity and matrices of monotone rank 2} 
\label{sec:complexity}

In this section, we show that computing the monotone rank of a matrix, or even deciding whether or not a matrix has monotone rank two, is computationally difficult. 
However, we also show that we can solve a combinatorial relaxation of this problem in polynomial time. 

\subsection{Computational Complexity}

\emph{The existential theory of the reals}, written $\exists \R$, is the complexity class of decision problems of the form
$$\exists(x_1 \in \R)\cdots \exists(x_n\in \R)P(x_1, \ldots , x_n),$$
where P is a quantifier-free formula whose atomic formulas are polynomial equations and inequalities in the $x_i$ \cite{broglia1996lectures}. 
Problems which are $\exists \R$-complete are not believed to be computationally tractable. 
In particular, they must be NP-hard.

Many classic problems in computational geometry fall into $\exists\R$ \cite{schaefer2009complexity}. 
Many of these results follow from the fact that
determining whether an oriented matroid is representable is $\exists \R$-complete \cites{sturmfels1987decidability, shor1991stretchability, mnev1988universality}. 

\begin{thm*}[\cites{sturmfels1987decidability, shor1991stretchability, mnev1988universality}]
The problem of deciding whether a uniform oriented matroid of rank $3$ is realizable is $\exists \R$-complete, and therefore NP-hard. 
\end{thm*}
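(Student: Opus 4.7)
The plan is to first establish that the problem lies in $\exists \R$, which is the easy direction, and then to show $\exists \R$-hardness via a universality-style reduction from a known $\exists \R$-complete problem. For containment: a uniform rank-$3$ oriented matroid on ground set $[n]$ is specified by its chirotope $\chi : \binom{[n]}{3} \to \{+,-\}$, and realizability asks for vectors $v_1, \ldots, v_n \in \R^3$ with $\sign \det(v_i, v_j, v_k) = \chi(\{i,j,k\})$ for every triple. This is a conjunction of strict polynomial inequalities in the $3n$ coordinates, so it is an $\exists \R$-formula of size polynomial in the input.

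For hardness, I would reduce from the feasibility problem for a system of strict polynomial inequalities over $\R$ (equivalently, nonemptiness of an open semi-algebraic set), which is $\exists \R$-complete by definition. Following Mnev, the strategy is to construct, in polynomial time from a given polynomial system, a uniform rank-$3$ oriented matroid whose realization space is ``stably equivalent'' to the semi-algebraic set defined by the system; in particular, the oriented matroid is realizable iff the system has a solution.

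The technical heart of the reduction is a von Staudt-style geometric encoding of arithmetic in the projective plane: a distinguished ``ruler'' line carries points encoding the input variables, and one uses auxiliary points and lines, together with prescribed incidences and betweenness relations, to simulate addition and multiplication geometrically. Each polynomial equation in the input is then enforced by demanding that a certain sequence of such constructions produce a coincidence among two points on the ruler, and each strict inequality by a prescribed sign of an oriented area. Translating all of these incidences and sign conditions into a chirotope yields a rank-$3$ oriented matroid whose realizations are in bijection (up to a projective transformation and innocuous parameters) with the solutions of the polynomial system.

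The main obstacle is maintaining \emph{uniformity}: a naive von Staudt construction forces collinearities, which would manifest as zeros in the chirotope and destroy uniformity. I expect to spend most of the work on the perturbation argument of Shor and Sturmfels, replacing each forced collinearity by a pair of strict sign conditions enforced by nearby ``witness'' points, and then verifying that in any realization of the resulting uniform chirotope the witness points force the intended collinearities to hold approximately, and in fact exactly, once one quotients by the action of projective transformations. The delicate part is ruling out spurious realizable chirotopes that arise from these perturbations without corresponding to actual solutions of the polynomial system; this is exactly where the universality theorem does its hardest work, and is where I would draw most heavily on the detailed constructions of \cites{shor1991stretchability, mnev1988universality}.
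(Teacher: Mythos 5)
The paper does not prove this statement at all---it is quoted as a known theorem with citations to Sturmfels, Shor, and Mn\"ev---and your sketch correctly reproduces the standard route those references take: membership in $\exists\R$ via the chirotope/determinant formulation, and hardness via Mn\"ev-style universality with von Staudt constructions, with Shor's perturbation trick handling uniformity. So your proposal is consistent with (and fills in more detail than) the paper's treatment, with the genuinely hard steps appropriately deferred to the cited works, exactly as the paper itself does.
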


For instance, the fact that sign rank is $\exists \R$-complete is established in  \cite{basri2009visibility} and independently in \cite{bhangale2015complexity}.  More precisely, for $r\geq 3$, the decision problem of determining whether a matrix $A$ has sign rank $r$ is $\exists \R$-complete. 
This follows from the $\exists \R$-completeness of determining whether an oriented is representable. 

Note that the fact that computing sign rank is $\exists \R$-complete does not immediately establish that computing  monotone rank is $\exists \R$-complete. 
However, we will  now show that the main result of \cite{hoffmann2018universality} implies that computing  monotone rank is also an $\exists \R$-complete problem.

\begin{ithm}\label{thm:complete}
The problem of deciding whether a $m\times n$ matrix $A$ has monotone rank 2 is $\exists \R$-complete, and therefore NP-hard.  
\end{ithm}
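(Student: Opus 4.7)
The plan is to prove $\exists\R$-completeness by establishing membership and hardness separately; NP-hardness will then follow from the standard inclusion $\mathrm{NP} \subseteq \exists\R$.

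Membership in $\exists\R$ is direct from the definitions. By Observation \ref{obs:sweep_orders}, $\mrank(A) \leq 2$ is equivalent to the existence of vectors $p_1, \ldots, p_m, h_1, \ldots, h_n \in \R^2$ such that, for every column $j$ and every pair of rows $i,k$ with $A_{ij} < A_{kj}$, one has $p_i \cdot h_j < p_k \cdot h_j$. This is a conjunction of $O(m^2 n)$ strict polynomial inequalities in $2(m+n)$ real unknowns, which fits the syntactic template of an $\exists\R$-formula.

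For hardness, I would reduce from the realizability problem proved $\exists\R$-complete in \cite{hoffmann2018universality}, whose main universality theorem addresses the combinatorial abstraction of planar sweeps (equivalently, a rank-3 instance close to the Mnev/Shor/Sturmfels stretchability setting cited just before the statement). Given such an instance, encode it as an $m \times n$ matrix $A$ by choosing, for each column $j$, an ordering of the entries that realizes the prescribed combinatorial data---since $\Pi(A)$ is determined solely by the orderings within columns, this construction is free and takes polynomial time. The bridge between the instance and the decision problem is the characterization recorded right after Observation \ref{obs:sweep_orders}: $\mrank(A) \leq 2$ holds iff there exists a point arrangement $\cP \subset \R^2$ with $\Pi(A) \subseteq \Pi(\cP)$. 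Thus the monotone rank-2 decision problem answers exactly the planar realizability question that \cite{hoffmann2018universality} establishes as hard.

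The principal obstacle is matching the exact form of the cited universality theorem to the containment condition $\Pi(A) \subseteq \Pi(\cP)$. Hoffmann's result may be phrased in terms of allowable sequences, chirotopes, or the exact sweep set $\Pi(\cP) = \Pi$, none of which coincide verbatim with containment. To bridge this, one option is to route the reduction through the difference topes: by Proposition \ref{prop:topes_completion}, $\mrank(A) \leq 2$ forces $\Sigma_{\diff}(A)$ to lie in the topes of a realizable rank-2 oriented matroid, which directly encodes a stretchable pseudoline arrangement and matches the classical $\exists\R$-complete setting. A second option is to pad $A$ with extra columns whose orderings exclude any undesired sweep permutations, converting a containment instance to an equality instance at only polynomial cost. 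Either route yields a polynomial-time many-one reduction from an $\exists\R$-hard source to monotone rank 2, and combined with membership gives completeness.
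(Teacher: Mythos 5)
Your reduction skeleton (encode each permutation of the allowable sequence as a column of a matrix, then use the fact that $\mrank(A)\leq 2$ iff there is a planar point configuration $\cP$ with $\Pi(A)\subseteq \Pi(\cP)$) is exactly the paper's, and your $\exists\R$-membership argument is fine. But the step you flag as ``the principal obstacle''---passing from the containment $\Pi(A)\subseteq\Pi(\cP)$ to realizability of the given allowable sequence---is precisely the crux, and neither of your two proposed bridges works. The paper closes this gap with a counting argument you are missing: the source problem is realizability of a \emph{simple} allowable sequence $\Pi$ on $m$ elements, which already has the maximum possible length $2\binom{m}{2}=m(m-1)$, since every pair of indices is swapped exactly twice; no allowable sequence of a planar configuration of $m$ points can be longer. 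Hence if some $\cP\subset\R^2$ satisfies $\Pi\subseteq\Pi(\cP)$, there is no room for extra sweep permutations and $\Pi(\cP)$ must coincide with $\Pi$, so containment automatically upgrades to equality and $\Pi$ is realizable. No padding or reformulation is needed.

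Your two fallback options are genuinely flawed, not just unpolished. Option A (route through $\Sigma_{\diff}(A)$ and ``stretchability'' of a rank-two arrangement) cannot yield hardness: every rank-two oriented matroid is realizable (this is stated in Section 5 and underlies Lemma \ref{lem:rank_2_topes}), and the paper in fact proves that the rank-two oriented matroid completion problem is solvable in polynomial time (Theorem \ref{thm:rank_2_completion}); the classical $\exists\R$-hard stretchability setting is rank three, which is not what monotone rank two produces. Option B (pad $A$ with extra columns to ``exclude undesired sweep permutations'') has no mechanism behind it: adding columns only adds permutations that the realizing configuration \emph{must} exhibit; it can never forbid $\cP$ from having additional sweep permutations, so it cannot convert a containment instance into an equality instance. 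Without the maximality-of-simple-sequences argument, your reduction proves only one direction (realizable $\Rightarrow$ $\mrank\leq 2$), and the hardness claim is unproven.
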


In order to state the main result of  \cite{hoffmann2018universality} and prove Theorem \ref{thm:complete}, we introduce \emph{allowable sequences}, which characterize the set  $\Pi(\cP)$, where $\cP$ is a point arrangement in the plane. 

Let $\cP$ be an arrangement of points in the plane. 
We consider the set of sweep permutations $\Pi(\cP)$, ordered as we rotate the sweep direction $h$ around a circle. 
In particular, the sweep permutation $\pi_h$ changes at the non-generic sweep directions: values of $h$ which are perpendicular to one or more of the lines containing two or more points of $\cP$. 
At these values, the line swept perpendicular to $h$ hits these points at the same time. 
At these values, the permutation changes by reversing the order of all points which the line hits at the same time. 
Thus, the sweep permutations  $\Pi(\cP)$ can be put in an order in which they form an \emph{allowable sequence}. 

\begin{defn}
\label{def:allowable}
Define $-\pi$ to be the permutation in the reverse order, i.e. if $\pi = 1342, -\pi = 2413.$ 

An \emph{allowable sequence} is a circular sequence of permutations $ \Pi = \pi_1,\pi_2, \ldots, \pi_{2m}$ of $[n]$ such that: 
\begin{enumerate}
    \item $\pi \in \Pi \Leftrightarrow -\pi\in \Pi$ 
    \item $\pi_{i+1}$ is obtained from $\pi_i$ by reversing the order of one or more disjoint substrings of $\pi_i$
    \item The order of each pair $i, j$ is reversed exactly once between each $\pi_{k}, -\pi_{k}$
\end{enumerate}
\end{defn}

An allowable sequence is called \emph{simple} if each pair $\pi_i, \pi_{i+1}$ differ by reversing a pair of adjacent entries. 
We are primarily concerned with simple allowable sequences because they correspond to point arrangements in general position. Our definition of an allowable sequences differs slightly from the standard one in \cite{goodman1993allowable} in that we consider an allowable sequence as a circular sequence of permutations; the standard definition considers the sequence of permutations beginning at one permutation and ending at its reverse permutation. Both definitions encode the same information. 

An allowable sequence is \emph{realizable} if it actually arises from a point configuration. 
As with oriented matroids, it is difficult to determine whether an allowable sequence is realizable, even for simple sequences. 
\begin{thm*}[Theorem 6, \cite{hoffmann2018universality}]
The realizability of simple allowable sequences is $\exists \R$-complete.
\end{thm*}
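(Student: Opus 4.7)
The plan is to establish $\exists\R$-completeness in two parts: first membership in $\exists\R$, which is essentially immediate, and then $\exists\R$-hardness, which carries the main content. For membership, given a simple allowable sequence $\Pi = \pi_1, \pi_2, \ldots, \pi_{2\binom{n}{2}}$ on $[n]$, I would introduce $2n$ real variables $x_1, y_1, \ldots, x_n, y_n$ for the coordinates of a purported realizing point configuration $\cP = \{p_1, \ldots, p_n\} \subset \R^2$. Realizability asserts that the $\binom{n}{2}$ sweep directions perpendicular to the lines $\overline{p_i p_j}$ occur in a cyclic order matching the swap transitions in $\Pi$, and that the projection order of $\cP$ onto a direction lying between the $i$th and $(i{+}1)$st transition is exactly $\pi_i$. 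Both conditions are Boolean combinations of polynomial (in)equalities in the coordinates, so realizability is directly expressible as an $\exists\R$ sentence.

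For hardness, I would reduce from the realizability problem for uniform rank $3$ oriented matroids, which is $\exists\R$-complete by Mnëv, Shor, and Sturmfels as cited in the excerpt. The reduction rests on the classical correspondence of Goodman and Pollack: simple allowable sequences on $[n]$ are in bijection with uniform rank $3$ oriented matroids (chirotopes) on $[n]$, and this bijection preserves realizability in the sense that a simple allowable sequence is realizable by a planar point configuration if and only if the corresponding chirotope is realizable. I would implement both directions of this bijection as polynomial-time algorithms. Given a chirotope $\chi \colon \binom{[n]}{3} \to \{+,-\}$, the associated allowable sequence is constructed by determining, for each pair $\{i,j\}$, the positions of its two swaps in the circular sequence using the combinatorial angular order of lines encoded by $\chi$, and then recording the induced permutations between consecutive swap events. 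Conversely, from $\Pi$ the chirotope value $\chi(i,j,k)$ is read off from the cyclic order in which the swaps of the pairs $(i,j)$, $(j,k)$, $(i,k)$ occur within one half-period of $\Pi$.

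The main obstacle will be pinning down the Goodman--Pollack bijection precisely enough to invoke it algorithmically, and verifying that realizability is transferred along it. On the chirotope-to-sequence side, I need to check that the angular order of lines extracted from $\chi$ actually yields a valid circular sequence of permutations satisfying the three axioms of Definition \ref{def:allowable}; concretely, this means showing that the relation ``line $\overline{p_i p_j}$ precedes line $\overline{p_k p_l}$'' derived from the signs of $\chi$ is transitive, antisymmetric on each half-period, and globally consistent, so that the resulting sequence of permutations changes by disjoint reversals and reverses each pair exactly once per half-period. Once the bijection is established, realizability transfer is straightforward: any point configuration realizing $\chi$ sweeps out precisely $\Pi(\chi)$, and any realization of $\Pi(\chi)$ is a point configuration whose triple orientations recover $\chi$. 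Combining the polynomial-time bijection with the $\exists\R$-completeness of uniform rank $3$ chirotope realizability yields the desired result.
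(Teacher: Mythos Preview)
The statement you are attempting to prove is not proved in this paper at all: it is quoted as Theorem~6 of \cite{hoffmann2018universality} and used as a black box in the proof of Theorem~\ref{thm:complete}. There is therefore no ``paper's own proof'' to compare your proposal against.

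That said, your outline is a reasonable sketch of one standard route to the result. A few remarks. First, the Goodman--Pollack correspondence is between simple allowable sequences and uniform \emph{acyclic} rank~$3$ oriented matroids (equivalently, order types of planar point configurations in general position), not arbitrary uniform rank~$3$ oriented matroids; you should make sure the $\exists\R$-hardness result you invoke covers the acyclic case, which it does, but this should be stated. Second, your description of extracting the allowable sequence from a chirotope is vague at exactly the point you flag as the ``main obstacle'': showing that the local angular-order information encoded by $\chi$ assembles into a single consistent circular order of the $\binom{n}{2}$ swap events is the substantive combinatorial content, and it is not automatic from the chirotope axioms alone---this is where the Goodman--Pollack theory does real work. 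Third, the actual proof in \cite{hoffmann2018universality} may proceed differently (e.g.\ via pseudoline stretchability or a direct Mn\"ev-type construction), so even if your argument can be completed it may not match the cited source.
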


To prove Theorem \ref{thm:complete}, we thus need to reduce the problem of checking whether an allowable sequence is realizable to the problem of checking whether a matrix has monotone rank two. 

\begin{proof}[Proof of Theorem \ref{thm:complete}]
Given a simple allowable sequence $\Pi$ on $m$ elements, we produce a matrix $A$ which has monotone rank 2 if and only if $\Pi$ is realizable. 
For each $\pi \in \Pi$, we produce a column vector $a_{\pi}$  of length $m$ whose entries are in the order given by $\pi$. 
Let $A(\Pi)$ be the matrix whose columns are $a_{\pi}$ for $\pi \in \Pi$. 
Now, we show that $A(\Pi)$ has monotone rank 2 if and only if $\Pi$ is representable. 

If $\Pi$ is representable, then we can choose points $\cP = \{p_1, \ldots, p_m\}$ representing $\Pi$, together with a set of normal vectors $\cH = \{h_\pi \mid \pi \in \Pi\}$  such that $h_\pi$ sweeps past the points in the order given by $\pi$. 
Then since each column order of $A(\Pi)$ corresponds to the order in which a hyperplane swept normal to $h_\pi$ sweeps past the points of $\cP$, there is a family of monotone functions $\cF$ such that $(\cP, \cH, \cF)$ is a rank two representation of $A(\Pi)$.

Next, suppose $A(\Pi)$ has monotone rank two, with a rank two representation $(\cP, \cH, \cF)$. 
We show that the allowable sequence of $\cP$ is $\Pi$. 
To do this, we note that we have already shown that every permutation in $\Pi$ arises from a sweep of $\cP$. 
Thus, we just need to show that there are no more sweep permutations of $\Pi$.
This follows because $\Pi$ is a simple allowable sequence. 
Thus, $\Pi$ has length $2 \binom m 2  = (m)(m-1)$, since every switch between permutations corresponds to swapping the order of one pair of points, and each pair of points is swapped exactly twice. 
Also notice that no other allowable sequence on $m$ points can be longer than a simple sequence. 
Thus, every sweep permutation of $\Pi$ must be contained in $\cP$. 
\end{proof}

%
%

\subsection{Matroid completion in rank two}
Through Theorem \ref{thm:complete},  we have shown that monotone rank estimation is hard, even for monotone rank two. 
On the other hand, we  will next show that its combinatorial relaxation, Problem \ref{prob:oriented_matroid_completion}, is easy for rank two. 
This follows from the fact that oriented matroids of rank two have a very simple structure.

\begin{thm}\label{thm:rank_2_completion}
Given a set of sign vectors  $\Sigma\subseteq \{+, -\}^n$ of size $m$, there is an $O(mn)$ algorithm to determine whether there is a rank two oriented matroid $\cM$ such that \[\Sigma \subseteq \cT(\cM).\]
\end{thm}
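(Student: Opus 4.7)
I would characterize rank-$\leq 2$ oriented matroids combinatorially and reduce the completion problem to a test of the circular-consecutive-ones property, which admits a linear-time algorithm.

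\emph{Step 1 (structure of rank-$\leq 2$ OM topes).} Every rank-$\leq 2$ oriented matroid is representable as a central line arrangement in $\R^2$. The topes correspond to the open chambers of the arrangement, which are cyclically enumerated by sweeping a ray around the origin. As the ray rotates, the sign of column $j$ flips exactly when the ray crosses the line $H_j$, which occurs at two antipodal angles per full revolution. Consequently, in the cyclic order on $\cT(\cM)$, the sign vectors $\sigma$ with $\sigma_j = +$ form a contiguous arc (the complementary arc being the $\sigma_j = -$ sign vectors), and antipodal topes $\sigma, -\sigma$ sit at diametrically opposite positions.

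\emph{Step 2 (reduction).} Let $\Sigma^\pm := \Sigma \cup (-\Sigma)$, with duplicates removed. I would prove: $\Sigma \subseteq \cT(\cM)$ for some rank-$\leq 2$ oriented matroid $\cM$ if and only if the rows of the sign matrix whose rows are the elements of $\Sigma^\pm$ admit a cyclic permutation such that, in every column, the rows with a $+$ occupy a contiguous arc. The forward direction is immediate from Step 1 applied to $\cT(\cM) \supseteq \Sigma^\pm$. For the reverse direction, given such a cyclic ordering, I would first observe that antipodal closure of $\Sigma^\pm$ combined with the arc property forces each column's $+$-arc and $-$-arc to have equal length, so $\sigma$ and $-\sigma$ sit diametrically opposite; I would then place $\Sigma^\pm$ at evenly spaced points on a circle and place a line through the origin at each arc-boundary to obtain a central line arrangement whose topes contain $\Sigma^\pm$, hence $\Sigma$.

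\emph{Step 3 (algorithm and runtime).} After an $O(mn)$ preprocessing pass to remove duplicate rows, form $\Sigma^\pm$, and reject instances with $|\Sigma^\pm| > 2n$ (which cannot be topes of a rank-$\leq 2$ OM), we are left with the circular-consecutive-ones decision problem on a $|\Sigma^\pm| \times n$ matrix of size $O(mn)$. Using Booth's reduction (complementing the entries of any single row) this reduces to linear consecutive-ones, which the Booth--Lueker PQ-tree algorithm decides in time linear in the number of matrix entries. This gives total runtime $O(mn)$, and when the answer is affirmative the PQ-tree data structure also yields an explicit cyclic ordering, from which the construction in Step 2 produces a certifying rank-$\leq 2$ OM.

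\emph{Main obstacle.} The principal difficulty is the converse direction of Step 2: the arc-order on $\Sigma^\pm$ does not directly present itself as an arrangement in $\R^2$, so one must explicitly build the line arrangement, handling degeneracies where several arc-boundaries coincide (parallel columns, i.e., columns whose signs agree or are opposite throughout $\Sigma^\pm$). The antipodal symmetry, which could appear to be an additional constraint beyond the arc property, is the other delicate point; as sketched above, it comes for free once $\Sigma^\pm$ is antipodally closed, which justifies passing from the seemingly more restrictive matroid-completion problem to the purely combinatorial circular-consecutive-ones problem.
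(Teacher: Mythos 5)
Your proposal is correct in substance, but it takes a genuinely different route from the paper's. The paper proves a structural lemma (Lemma \ref{lem:rank_2_topes}): an antipodally closed set is the tope set of a rank two oriented matroid iff it admits a circular order in which each coordinate's sign changes exactly once between $X$ and $-X$; it then gives a bespoke greedy algorithm --- fix a distinguished tope $X^*$, bucket-sort $\Sigma_\pm$ by $|\sep(X,X^*)|$, greedily extend a chain from $X^*$ to $-X^*$ using the condition $\sep(X_{last},X)\cup\sep(X,-X^*)=\sep(X_{last},-X^*)$, and check that every antipodal pair is hit --- all in $O(mn)$ and fully self-contained. You instead observe that the same structural condition is exactly the circular consecutive-ones property per column on $\Sigma^\pm$, and delegate the decision to Tucker's complementation trick plus the Booth--Lueker PQ-tree algorithm, again $O(mn)$; your reverse direction (evenly spaced points, a central line at each arc boundary, duplicated lines for parallel columns) is essentially the same realization construction the paper uses inside Lemma \ref{lem:rank_2_topes}. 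What your route buys is reliance on a standard, well-analyzed subroutine and an explicit certifying order from the PQ-tree; what the paper's buys is elementarity (no PQ-tree machinery) and a proof that is verifiable within a page. Two small caveats in your write-up: the inference that equal $+$- and $-$-arc lengths force $\sigma$ and $-\sigma$ to sit diametrically opposite is not justified as stated --- fortunately your construction never needs it, only the (correct) fact that antipodal closure makes each column's $+$-arc contain exactly half the elements, so its two boundary gaps are antipodal under even spacing; and the Tucker/Booth reduction complements the \emph{columns} having a $+$ in one fixed row, not ``the entries of any single row,'' though the intended standard reduction is clear and correct.
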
 

Notice that this has Theorem \ref{ithm:easy} from the introduction as a corollary: since the difference topes of an $m\times n$ matrix are a set of $m(m-1)$ sign vectors of length $n$, applying Theorem \ref{thm:rank_2_completion} to $\Sigma_{\diff}(A)$ gives a $O(m^2 n)$ algorithm to determine whether a $m\times n$ matrix has difference oriented matroid completion rank 2. 
If $\orank_{\diff}(A) > 2$, we can conclude that $\mrank(A) > 2$ as well. 
Thus, for matrices of monotone rank two, this combinatorial relaxation is much easier than the original problem. 
For higher ranks, we do not yet know whether or not this is the case. 

We will reserve a full proof of Theorem \ref{thm:rank_2_completion} and a description of the algorithm involved for the appendix. 
Intuitively, this result depends on the very simple structure of rank two oriented matroids, which we describe here. 
Every rank two oriented matroid is realizable, and has a realization with a central hyperplane arrangement in the plane, as illustrated in Figure \ref{fig:rank_2}. 
This gives rank two matroids a certain structure.

\begin{figure}[ht!]
\includegraphics[width = 2.5 in]{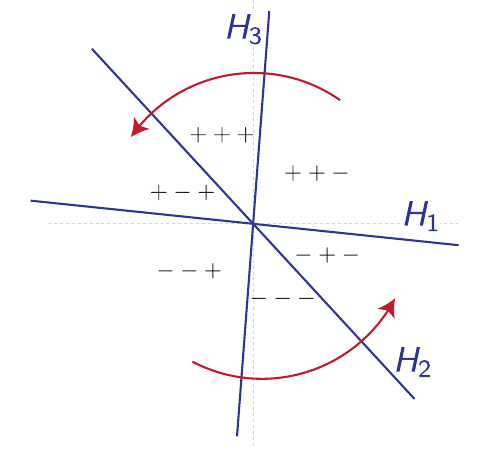}
\caption{
\label{fig:rank_2}
The topes of any rank two oriented matroid can be placed in a unique (up to reversal) circular order,  such that the sign of $X_i$ changes only once between $X$ and $-X$. This corresponds to traversing them counterclockwise in a circle centered at the origin. In the illustrated example, this order is $++-, +++, +-+, --+, ---, -+-$. 
}
\end{figure}

\begin{lem}\label{lem:rank_2_topes}
The topes of a rank two oriented matroid $\cM$ can placed in a unique (up to reversal) circular  order so that the sign of $X_i$ changes exactly once between $X$ and $-X$. 
Further, any set of sign vectors which is closed under negation and can be placed in such an order is the set of topes of a rank two oriented matroid. 
\end{lem}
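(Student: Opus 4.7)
The plan is to use the fact (standard in rank $\leq 2$ oriented matroid theory) that every rank two oriented matroid is realizable by a central arrangement of lines through the origin in $\R^2$, and to argue geometrically on both directions of the equivalence.

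For the forward direction, I would realize $\cM$ as a central line arrangement $H_1, \ldots, H_n$ with normals $h_1, \ldots, h_n$; the topes are exactly the two-dimensional open chambers in the complement. These chambers inherit a natural counterclockwise circular order around the origin. The antipodal chamber of a tope $X$ is diametrically opposite, so the counterclockwise arc from $X$ to $-X$ crosses each line $H_i$ exactly once, flipping coordinate $i$ exactly once, which is the required property. For uniqueness up to reversal, I would observe that each coordinate $i$ partitions the topes into the two arcs $\{X : X_i = +\}$ and $\{X : X_i = -\}$, and any cyclic order satisfying the sign-change property must make each of these sets a contiguous arc (otherwise $X_i$ would flip more than once between some $X$ and $-X$); jointly over all coordinates, these arc-partitions pin down the cyclic order up to choosing the direction of traversal.

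For the backward direction, given $\Sigma$ closed under negation and arranged as $X^{(1)}, \ldots, X^{(2m)}$ with $X^{(k+m)} = -X^{(k)}$ in a valid cyclic order, I would build an explicit line arrangement realizing it. The sign-change property guarantees that for each coordinate $i \in [n]$ there is a unique transition index $k_i$ in the first half of the cycle at which coordinate $i$ flips between consecutive topes. I would place tope representatives at distinct angles $\theta_1 < \cdots < \theta_{2m} < \theta_1 + 2\pi$ on the unit circle, and for each $i$ choose the normal $h_i$ perpendicular to the ray at angle $\tfrac{1}{2}(\theta_{k_i} + \theta_{k_i+1})$, oriented so that the side carrying the $+$ topes is the $H_i^+$ side. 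A check then confirms that each chamber of the resulting arrangement corresponds to exactly one tope in $\Sigma$ and carries its claimed sign vector; hence $\Sigma$ is the tope set of the rank two oriented matroid of this line arrangement.

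The main obstacle I anticipate is the backward direction, specifically handling the case when several consecutive transitions coincide, i.e.\ when distinct coordinates $i \neq j$ give $k_i = k_j$. There the normals $h_i, h_j$ must be placed parallel so that $H_i = H_j$, and consecutive topes genuinely differ in multiple coordinates simultaneously. One must verify that the number of distinct line directions produced equals $m$, so that the arrangement has exactly $2m$ chambers---neither more (which would add spurious topes not in $\Sigma$) nor fewer (which would fail to realize all of $\Sigma$)---and that the global chamber-to-tope bijection respects the prescribed cyclic order, not just the local transitions.
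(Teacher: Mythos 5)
Your proposal is correct and follows essentially the same route as the paper's proof: realize the rank two oriented matroid as a central line arrangement in $\R^2$ and read the circular order off the chambers for the forward direction, and for the converse build a central arrangement from the circular order with one line per coordinate placed at its flip, letting lines coincide when several coordinates flip simultaneously (the paper's ``duplicate the multiply-labelled hyperplane'' step). Two small remarks: placing the $2m$ representatives at equally spaced (hence antipodally symmetric) angles, as the paper does, makes the chamber-to-tope correspondence for the second half of the cycle automatic, whereas arbitrary distinct angles would require an extra check; and your contiguous-arc argument for uniqueness up to reversal is a welcome addition, since the paper states uniqueness but only proves existence of the order.
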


\begin{proof}
Let $\cM$ be a rank two oriented matroid with topes $\cT(\cM)$. 
We show that $\cT(\cM)$ can be placed in a circular order such that for any $X$,  the sign of $X_i$ changes exactly once between $X$ and $-X$. 
By a basic result of oriented matroid theory \cite{bjorner1999oriented}*{Section 6.1}, every rank two matroid can be represented by a central hyperplane arrangement in $\R^{2}$. 
Now, order the topes based on the order in which a point traveling in a circle centered at the origin encounters the chamber corresponding to each tope. 
Notice that as we follow the circle,  we cross each hyperplane $H_i$ exactly once in each direction. 
Thus, between any $X, -X$, the sign of $X_i$ changes exactly once, so we have obtained the desired order. 

Now, suppose $\Sigma$ is a set of sign vectors which is closed under negation and can be placed in a circular order such that the sign of $X_i$ changes exactly once between each $X, -X$. 
We will use this order to construct a  central hyperplane arrangement in $\R^2$  with topes $\Sigma$. 
We divide the circle into $|\Sigma|$ equally sized sections, noting that $|\Sigma|$ is even since it is closed under negation. 
Thus, the lines defining these sections form a hyperplane arrangement. 

We show that this hyperplane arrangement is a realization of $\Sigma$ as a rank two oriented matroid. 
First, we notice that in this order, for each $Y\neq \pm X$, exactly one of $Y, -Y$ appears on the path from $X$ to $-X$. 
Thus, there are the same number of elements separating each $X, -X$, so the sections corresponding to $X$ and $-X$ are opposite one another in the arrangement. 
Thus, we can consistently label each hyperplane in the arrangement by the elements of $\sep(X, Y)$ for each pair $X, Y$ separated by it. 
Now, notice that because the sign of $X_i$ changes exactly once between $X, -X$, each $i$ appears in the label of only one hyperplane. 
For every hyperplane which is given multiple labels, we duplicate it into a family of overlapping hyperplanes, one with each label. 
We orient each hyperplane such that the topes with $X_{i} = +$ are on the positive side of $H_i$.  
Finally, we notice that this hyperplane arrangement realizes a rank two oriented matroid with topes $\Sigma$. 
\end{proof}

This structure makes topes of rank two oriented matroids easy to recognize, even when they are out of order or missing elements.
For instance, we consider the set of sign vectors $\Sigma = \{++++, ++--, -+-+, ----, --++, +-+-\}$. We see that, restricting to the first three components yields the sign pattern in Figure \ref{fig:rank_2}. Thus, up to reflection, the sign vectors must be placed in the unique order pictured in Figure \ref{fig:rank_2}, in order to satisfy Lemma \ref{lem:rank_2_topes} in the first three components. However, this ordering does not satisfy Lemma \ref{lem:rank_2_topes}  for the fourth component. Thus, this set of sign vectors has oriented matroid completion rank at least 3, even though it has VC dimension 2. 

The final step to prove Theorem \ref{thm:rank_2_completion} is to write down an algorithm which takes in a set of sign vectors and determines whether they can be put in an order which satisfies Lemma \ref{lem:rank_2_topes}. We do this in  Appendix \ref{sec:rank_2_proof}. 
Similar algorithms appear in slightly different contexts in \cite{basri2009visibility} and \cite{rosen2017convex}.

\section{Conclusion and Open Questions}
Via Theorem \ref{ithm:omrank}, we have shown that connecting monotone rank to oriented matroid theory can give us lower bounds which exceed those using only the VC dimension. 
Further, the fact that it is easy to identify matrices with oriented matroid completion rank two, even though it is already hard to identify matrices with monotone rank two, implies that this lower bound may be easier to compute in general. 
\label{sec:conclusion}
\begin{question}
Is there an efficient algorithm to solve Problem \ref{prob:oriented_matroid_completion} and its dual version, Problem \ref{prob:dual_oriented_matroid_completion} in general? 
\end{question}

In addition to giving an algorithm for oriented matroid completion problems, it is also interesting to consider what conditions a set of sign vectors must have in order to be completed to an oriented matroid of a specified rank. 

\begin{question}
Let $\Sigma\subseteq \{+, -\}^n$ be a set of sign vectors with VC dimension $d$.  Which additional conditions must $\Sigma$ satisfy in order to guarantee that there exists a rank $d$ oriented matroid $\cM$ such that 
$\Sigma \subseteq \cT(\cM)$? 
\end{question}

As a special case, we consider the question of whether every condition oriented matroid of rank $d$, as defined in \cite{bandelt2018coms} is contained within an oriented matroid. 
Further, we can also pose their Conjecture 1, that every conditional oriented matroid is a fiber of an oriented matroid, as an oriented matroid completion problem.

Our other open questions consider the relationship between monotone rank and oriented matroid rank, and on the ranges of possible values each of these quantities can take on matrices of a given size.
First, we ask how good of a lower bound oriented matroid completion rank is for monotone rank?

\begin{question}
How large can the gap between the oriented matroid completion rank of a matrix and its monotone rank be? 
\end{question}

Using the example of the Hadamard matrices, we have shown that monotone rank can grow with the square root of the matrix size. 
However, it is not known whether it is possible to exceed this bound. 

\begin{question}
How high can the monotone rank of a $m\times n$ matrix be? 
\end{question}

We can also ask this question for monotone rank. 

\begin{question}
How high can the oriented matroid completion rank of a $m\times n$ matrix be? 
\end{question}

In particular, while we have shown that the Hadamard matrices have high sign rank, we do not know their oriented matroid rank, and how it compares. 

\begin{question}
Let $\Sigma_N$ be the set of sign vectors arising as rows of the $N\times N$ Hadamard matrix, for each $N = 2^n$. What is the oriented matroid completion rank of $\Sigma_N$? 
\end{question}

\section*{Acknowledgments}
I would like to thank Carina Curto for initially proposing the problem of estimating underlying rank, and for giving me feedback, direction, and advice on this project. Additionally, I benefited from helpful discussions with Hannah Rocio Santa Cruz, Juliana Londono Alvarez, Vladimir Itskov, Laura Anderson, and Michael Dobbins. I was supported by the NSF fellowship DGE1255832 and the Center for Systems Neuroscience postdoctoral  fellowship at Boston University.

\bibliography{Biblio-Database}{}
\bibliographystyle{plain}
\appendix 

\section{Algorithm for recognizing rank two oriented matroids}
\label{sec:rank_2_proof}
To prove Theorem \ref{thm:rank_2_completion}, we show that it is possible to find this order efficiently. We repeat the theorem here: 
\begin{thm*}[Theorem \ref{thm:rank_2_completion}]
Given a set of sign vectors  $\Sigma\subseteq \{+, -\}^n$ of size $m$, there is an $O(mn)$ algorithm to determine whether there is a rank two oriented matroid $\cM$ such that \[\Sigma \subseteq \cT(\cM).\]
\end{thm*}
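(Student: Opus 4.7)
The approach reduces the problem to the \emph{circular ones property} for a $\{0,1\}$-matrix, for which $O(mn)$ algorithms are known (PC-trees of Hsu, or reduction to PQ-trees of Booth and Lueker). By Lemma~\ref{lem:rank_2_topes}, $\Sigma \subseteq \cT(\cM)$ for some rank-two matroid $\cM$ if and only if $\Sigma' := \Sigma \cup (-\Sigma)$ admits a cyclic ordering in which the sign of each coordinate $i \in [n]$ flips exactly twice around the cycle. Encoding $+ \mapsto 1$, $- \mapsto 0$, this is precisely the statement that the rows of the $|\Sigma'| \times n$ matrix can be cyclically permuted so that the $1$s in each column form a consecutive arc.

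The detailed plan is as follows. First, compute $\Sigma'$ in $O(mn)$ time. Since a central arrangement of $n$ lines in $\R^2$ divides the plane into at most $2n$ chambers, any rank-two oriented matroid on ground set $[n]$ has at most $2n$ topes; if $|\Sigma'| > 2n$ we return NO. Otherwise, run the $O(mn)$ circular ones test on the matrix encoding $\Sigma'$; a successful circular ordering yields, via Lemma~\ref{lem:rank_2_topes}, a rank-two tope set containing $\Sigma$ (after adding any missing topes needed to close the matroid), while failure certifies non-existence.

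For a self-contained alternative, one can fix a reference $X \in \Sigma'$, compute the separators $D(Y) = \sep(X, Y) \subseteq [n]$ for each $Y \in \Sigma'$, and observe that the two arcs from $X$ to $-X$ in any valid cyclic ordering correspond to chains in $(2^{[n]}, \subseteq)$ running from $\emptyset$ to $[n]$. By Dilworth's theorem, such a partition exists iff the family $\{D(Y) : Y \in \Sigma'\}$ contains no antichain of size three. Checking this in $O(mn)$ amounts to sorting the $D$-values by cardinality and greedily assigning each to one of two maintained chain tops. The subtle step, and the principal obstacle, is the case when a new separator contains both active chain tops: the correct greedy rule is to extend the chain whose top is \emph{not} contained in the other top, preserving maximum flexibility for subsequent assignments. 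Verifying the correctness of this rule, while standard in the patience-sorting and PC-tree literature, is the one ingredient beyond Lemma~\ref{lem:rank_2_topes} that requires careful case analysis.
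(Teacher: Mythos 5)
Your main plan is correct but takes a genuinely different route from the paper. You reduce the question to the circular-ones property of the $\pm$-matrix of $\Sigma' = \Sigma\cup(-\Sigma)$ and invoke PQ-/PC-tree recognition, which indeed runs in $O(mn)$ here since the matrix has at most $2m$ rows and every entry is nonzero; the equivalence you cite is right, though the ``only if'' direction needs one small unstated step (the circular order on the full tope set $\cT(\cM)$ given by Lemma \ref{lem:rank_2_topes} must be restricted to the subset $\Sigma'$, and one should note each coordinate still flips exactly twice because antipodal pairs differ in every coordinate). The paper instead gives a completely elementary, self-contained greedy: fix a reference tope $X^*$, sort $\Sigma_{\pm}$ by $|\sep(X,X^*)|$, grow a single inclusion-monotone chain of separators from $X^*$ to $-X^*$, and accept iff every antipodal pair contributes an element of the chain. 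Your reduction buys brevity and offloads correctness and the time bound to standard literature; the paper's algorithm buys self-containedness and avoids PQ-tree machinery entirely.

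Your ``self-contained alternative'' is much closer to the paper's actual proof (reference element, separators $D(Y)=\sep(X,Y)$, sort by cardinality, greedy), but it has a real soft spot that the paper's one-chain version avoids. Your tie-breaking rule is ambiguous exactly in the case you call subtle: when the new separator contains both chain tops and the tops are incomparable, ``the chain whose top is not contained in the other'' selects nothing, and greedy two-chain covering with an arbitrary choice there is not correct for general families (process $\{1\},\{2\},\{1,2\},\{1,3\}$ by size: placing $\{1,2\}$ above $\{1\}$ strands $\{1,3\}$ even though two chains exist). What rescues your greedy here is the complement-closure $D(-Y)=[n]\setminus D(Y)$: if $S,T$ lie in the two arcs of a valid order and $S\subseteq T$, then comparability within each arc forces $S=\emptyset$ or $T=[n]$, so the ambiguous case can only occur for $D=[n]$ and is harmless. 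This structural fact is precisely what the paper exploits, in a cleaner way, by building only one chain and checking at the end that each pair $\{Y,-Y\}$ is represented (the second arc being the negation of the first), so no two-chain tie-breaking analysis is needed. If you keep the alternative, you should either prove the invariant above or switch to the paper's single-chain formulation.
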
 
We do this with Algorithm \ref{alg:rank_2}. 
	
\begin{algorithm}	
\begin{algorithmic}
\caption{Algorithm for deciding whether a set of topes is contained in a rank two oriented matroid.
\label{alg:rank_2}}
\State \textbf{Input}:  $\Sigma \subseteq \{+, -\}^n$ a set of $m$ sign vectors of length $n$
\State \textbf{Output}: returns true if  $\Sigma$ is contained in the topes of a rank two oriented matroid. 
\Function{IsRank2Topes}{$\Sigma$}
\State Let $\Sigma_{\pm} = \{\pm X \mid X\in \Sigma\}$. 
\State Pick a distinguished element $X^* \in \Sigma_{\pm}$. 
\State Sort $\Sigma_{\pm}$ in increasing order by  $|\sep(X, X^*)|$. Resolve ties arbitrarily. 
\State Let $\Sigma_{final} = [X^*]$. 
\For{ $X\in \Sigma_{\pm}$} 
Let $X_{last}$ be the last element of $\Sigma_{final}$. 
\If{$\sep(X_{last}, X) \cup \sep (X, -X^*) = \sep(X_{last}, -X^*)$} 
\State add $X$ to the end of  $\Sigma_{final}$.  
\EndIf
\EndFor
\If{for each $X\in \Sigma_{\pm}$, either $X$ or $-X$ is contained in $\Sigma_{final}$}
\State\Return true
\Else
\State \Return false 
\EndIf
\EndFunction
\end{algorithmic}
\end{algorithm}

\begin{proof}[Proof of Theorem \ref{thm:rank_2_completion}]
We prove that the function \textsc{IsRank2Topes} from Algorithm \ref{alg:rank_2} returns true if and only if there is a rank two matroid $\cM$ such that 
$X \subseteq \cT(\cM)$. 

First, suppose \textsc{IsRank2Topes}($\Sigma$) returns true. 
Then we can produce a list $\Sigma_{final}$ containing at least one of $X, -X$ in for each $ X\in \Sigma$. 
Further, if we pick $X^*$ as the first element of $\Sigma_{final}$, then for each pair of adjacent elements $X, Y$ in  $\Sigma_{final}$,  $\sep(X,Y) \cup \sep(Y, -X^*) = \sep(X, -X^*)$. 
This implies that for each $i \in [n]$, the sign of the $X_i$ only changes at most once between $X^*$ and $-X^*$. 
We can complete  $\Sigma_{final}$ to a full circular order of all elements of $\Sigma_{\pm}$. 
We do this by appending the list $[-X \mid X\in \Sigma_{final}]$ onto the end of $\Sigma_{final}$, removing the duplicate copies of $\pm X^*$. 
We notice that this gives a new ordering of the entries of $\Sigma_{pm}$ which satisfies the condition given in Lemma  \ref{lem:rank_2_topes} to be the topes of a rank two oriented matroid. 

Next, we prove that if $\Sigma$ is contained within the topes of a rank two oriented matroid, then  \textsc{IsRank2Topes}($\Sigma$) will return true. 
By Lemma  \ref{lem:rank_2_topes}, if $\Sigma$ is contained within the topes of a rank two oriented matroid, then $\Sigma_{\pm}$ can be placed in a circular order such that for any $X$, the sign of $X_i$ changes exactly once between $X$ and $-X$. 
We show that Algorithm \ref{alg:rank_2} will find this order (or its reverse). 
We can pick any distinguished element $X^*$. 
Now, notice that as we go from $X^*$ to $-X^*$ in either direction along this circular order, the quantity $\sep(X, X^*)$ increases. 
Thus, as we try to insert elements into $\Sigma_{final}$, our choice of whichever element to insert first will determine which way we go around the circle, and after this, we will completely fill in one path from $X^*$ to $-X^*$ around the circle. 
Thus, for each $X\in \Sigma_{\pm}$, we will insert either $X$ of $-X$ into $\Sigma_{final}$.

Next, we prove that  Algorithm \ref{alg:rank_2} runs in time $O(mn)$, where $m$ is the number of sign vectors and $n$ is the length of each sign vector. 
For each $X\in \Sigma_{\pm}$, we can calculate $|\sep(X, X^*)|$ in time $O(n)$. We can thus calculate all of these values in $O(mn)$ time. 
Now, since there are only $n$ possibilities for  $|\sep(X, X^*)|$, we can use  bucket-sort to sort $\Sigma_{\pm}$ into increasing order by $|\sep(X, X^*)|$ in $O(m + n)$ time.
We can now loop through this sorted list and check whether  $\sep(X_{last}, X) \cup \sep (X, -X^*) = \sep(X_{last}, -X^*)$ in $O(mn)$ time as well. 
Finally, checking whether we have inserted either $X$ or $-X$ takes $O(m)$ time. 
Thus, overall, the algorithm runs in time $O(mn)$. 
\end{proof}

\end{document}